\documentclass{article}
\usepackage[mathscr]{eucal}
\usepackage{amsmath,amssymb,amsbsy,amsthm,amsfonts,authblk,comment}
\usepackage[textwidth=125truemm,textheight=185truemm]{geometry}
\usepackage{hyperref}
\hypersetup{
  pdfauthor = {Morimichi Kawasaki and Ryuma Orita},
  pdfkeywords = {Symplectic manifolds, groups of Hamiltonian diffeomorphisms, moment maps, symplectic quasi-states, heavy subsets},
  pdftitle = {Rigid fibers of spinning tops},
  pdfpagemode = UseNone,
  bookmarksnumbered=true,
}

\title{Rigid fibers of spinning tops}

\author[1,*]{Morimichi Kawasaki}
\author[2,{\dag}]{Ryuma Orita}
\affil[1]{Research Institute for Mathematical Sciences, Kyoto University, Kyoto 606-8502, Japan}
\affil[2]{Department of Mathematical Sciences, Tokyo Metropolitan University, Tokyo 192-0397, Japan}
\affil[*]{\texttt{kawasaki@kurims.kyoto-u.ac.jp}}
\affil[{{\dag}}]{\texttt{ryuma.orita@gmail.com}}

\date{January 30, Reiwa 2}


\newtheorem{theorem}{Theorem}[section]
\newtheorem{lemma}[theorem]{Lemma}
\newtheorem{proposition}[theorem]{Proposition}
\newtheorem{corollary}[theorem]{Corollary}
\newtheorem{problem}[theorem]{Problem}
\newtheorem{conjecture}[theorem]{Conjecture}

\theoremstyle{definition}
\newtheorem{definition}[theorem]{Definition}
\newtheorem{example}[theorem]{Example}

\theoremstyle{remark}
\newtheorem{remark}[theorem]{Remark}

\newcommand{\Ham}{\mathrm{Ham}}

\newcommand{\supp}{\mathrm{supp}}
\newcommand{\RR}{\mathbb{R}}

\newcommand{\ZZ}{\mathbb{Z}}

\newcommand{\SO}{\mathrm{SO}}
\newcommand{\sgn}{\mathrm{sgn}}

\newcommand{\relmiddle}[1]{\mathrel{}\middle#1\mathrel{}}


\begin{document}

\maketitle

\begin{abstract}
(Non-)displaceability of fibers of integrable systems has been an important problem in symplectic geometry.
In this paper, for a large class of classical Liouville integrable systems containing the Lagrangian top, the Kovalevskaya top and the C. Neumann problem, we find a non-displaceable fiber for each of them.
Moreover, we show that the non-displaceable fiber which we detect is the unique fiber which is non-displaceable from the zero-section.
As a special case of this result, we also show that a singular level set of a convex Hamiltonian is non-displaceable from the zero-section.
To prove these results, we use the notion of superheaviness introduced by Entov and Polterovich.
\end{abstract}

\tableofcontents





\section{Introduction}\label{intro}

\subsection{Backgrounds}

Let $(M,\omega)$ be a symplectic manifold.
A subset $X\subset M$ is called \textit{displaceable} from a subset $Y\subset M$
if there exists a Hamiltonian $H\colon [0,1] \times M\to\RR$ with compact support such that $\varphi_H(X)\cap\overline{Y}=\emptyset$,
where $\varphi_H$ is the \textit{Hamiltonian diffeomorphism} generated by $H$ (see Section \ref{notation} for the definition)
and $\overline{Y}$ is the topological closure of $Y$.
Otherwise, $X$ is called \textit{non-displaceable} from $Y$.
For simplicity, we call $X$ (non-)displaceable if $X$ is (non-)displaceable from $X$ itself.

The problem of (non-)displaceability of a subset of a symplectic manifold (from another subset or from itself) has attracted much attention in symplectic geometry.
Non-displaceability results often pinpoint symplectic rigidity,
namely the difference between symplectic topology and differential topology,
and lead to interesting results in symplectic topology and Hamiltonian dynamics, see for example \cite{PPS}.
In this paper, all symplectic manifolds are cotangent bundles $T^*N$ over closed smooth manifolds $N$,
equipped with the standard symplectic form.
These are the phase spaces of classical mechanics.
The first result on non-displaceability in cotangent bundles was non-displaceability of the zero-section \cite{Gr,LS,Ho,Fl}.
The traditional tools (Morse theory for generating functions, $J$-holomorphic curves, and Floer homology) work only when the set in question is a submanifold.
However, many dynamically relevant subsets of cotangent bundles are not submanifolds.
Examples are energy levels of autonomous Hamiltonians at which the qualitative behavior of the dynamics changes,
like Ma\~n\'e's critical values, and certain subsets therein.
In \cite{EP06}, Entov and Polterovich used Floer homology to construct a function theoretical method
that is designed to detect the non-displaceability of arbitrary closed subsets (we refer to \cite{E,PR} as good surveys).
This theory was adapted by Monzner, Vichery, and Zapolsky \cite{MVZ} to cotangent bundles.
In this paper we use their theory to prove the non-displaceability of fibers of classical integrable systems or the energy level corresponding
to Ma\~n\'e's critical value.

We also note that there are some extrinsic applications of non-displaceability.
Polterovich \cite{P14} proved the existence of a invariant measure of some Hamiltonian flow using non-displaceability of some subset in certain situations.
He \cite{P98} also constructed a Hamiltonian diffeomorphism with arbitrary large Hofer's norm using non-displaceability of $E\times 0_{S^1}$ in $S^2\times T^\ast S^1$, where $E$ is the equator of $S^2$ and $0_{S^1}$ is the zero-section of $T^*S^1$.
In \cite{Ka17}, the first author posed some generalization of Bavard's duality theorem.
Combing it with Polterovich's above result, he pointed out that the existence of stably non-displaceable fibers might be related to the existence of partial quasi-morphisms on the group of Hamiltonian diffeomorphisms.

Many examples of non-displaceable subsets are given as fibers of some Liouville integral systems.
Let $k$ be a positive integer.
We call a smooth map $\Phi=(\Phi_1,\ldots,\Phi_k)\colon M\to\RR^k$ a \textit{moment map} if $\{\Phi_i,\Phi_j\} =0$ for all $1\leq i,j\leq k$,
where $\{\cdot,\cdot\}$ denotes the Poisson bracket on $(M,\omega)$.
A moment map $\Phi=(\Phi_1,\ldots,\Phi_k)\colon M\to\RR^k$ is called a \textit{Liouville integrable system} if $k=\dim{M}/2$ and $(d\Phi_1)_x,\ldots,(d\Phi_k)_x$ are linearly independent almost everywhere.

Especially, many researchers have studied (non-)displaceable fibers of Liouville integrable system associated with toric structures.
For example, see \cite{BEP,Ch,EP09,Mc,FOOO10,FOOO11a,FOOO12,AbMa,ABM,KLS,AFOOO}.
Recently some researchers study (non-)displaceable fibers of ``moment maps" associated with various generalizations of toric structure like Gelfand--Cetlin systems, semi-toric structures and so on (see, e.g., \cite{NNU,W,Vi,CKO,KO19b}).

In this paper, we deal with classical Liouville integrable systems on contangent bundles.
We study (non-)displaceable fibers of moment maps on the cotangent bundle of the two-sphere $S^2$ or the three-dimensional rotation group $\SO(3)$ which appear in classical mechanics, for example, the spherical pendulum, the Lagrange top and the Kovalevskaya top.
As a previous research in a similar direction, we refer to Albers--Frauenfelder's work \cite{AF08}.
They proved non-displaceability of the Polterovich torus in $T^\ast S^2$ which can be regarded as a fiber of some Liouville integrable system.

As a general fact on (non-)displaceability of fibers of moment maps,
Entov and Polterovich \cite{EP06} proved the following theorem.

\begin{theorem}[{\cite[Theorem 2.1]{EP06}}]\label{existence of non-disp fiber}
Let $(M,\omega)$ be a closed symplectic manifold
and $\Phi=(\Phi_1,\ldots,\Phi_k)\colon M\to\RR^k$ a moment map.
Then, there exists $y_0\in\Phi(M)$ such that $\Phi^{-1}(y_0)$ is non-displaceable.
\end{theorem}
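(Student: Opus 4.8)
The plan is to deduce the statement from the existence of a \emph{partial symplectic quasi-state} on $(M,\omega)$, i.e.\ a functional $\zeta\colon C(M)\to\RR$ that is monotone, normalized ($\zeta(1)=1$), homogeneous for nonnegative scalars, subadditive, and, crucially, has the two properties: (a) $\zeta$ vanishes on every function whose support is displaceable, and (b) $\zeta$ restricts to a genuine linear functional on every Poisson-commuting subalgebra of $C(M)$. Such a $\zeta$ exists on an arbitrary closed symplectic manifold: one takes $\zeta(H)=\lim_{n\to\infty}c(nH)/n$, where $c$ is the Oh--Schwarz spectral invariant attached to the fundamental class (or to an idempotent of $\QH(M)$). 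Granting this input, the rest is soft analysis.

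First I would push $\zeta$ forward along $\Phi$. Since $\{\Phi_i,\Phi_j\}=0$, for any $F,G\in C(\Phi(M))$ the pullbacks satisfy $\{F\circ\Phi,G\circ\Phi\}=\sum_{i,j}(\partial_iF)(\partial_jG)\{\Phi_i,\Phi_j\}=0$, so the functions of the form $F\circ\Phi$ generate a Poisson-commuting subalgebra. By property (b), $\zeta$ is linear there, so $\tau(F):=\zeta(F\circ\Phi)$ is a positive, normalized, linear functional on $C(\Phi(M))$, the space of continuous functions on the compact set $\Phi(M)\subset\RR^k$. By the Riesz representation theorem there is a Borel probability measure $\mu$ on $\Phi(M)$ with $\tau(F)=\int_{\Phi(M)}F\,d\mu$. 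Being a probability measure on a nonempty compact set, $\mu$ has nonempty support; fix any $y_0\in\supp\mu$.

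Next I would show $\Phi^{-1}(y_0)$ is non-displaceable by contradiction. If it were displaceable, then, since $\Phi^{-1}(y_0)$ is compact and $\Phi$ is continuous, the same Hamiltonian diffeomorphism would still displace $\Phi^{-1}(U)$ for a sufficiently small open neighborhood $U\ni y_0$ in $\RR^k$. Choose $F\in C(\RR^k)$ with $F\geq 0$, $\supp F\subset U$, and $F(y_0)>0$. Then $\supp(F\circ\Phi)\subset\Phi^{-1}(U)$ is displaceable, so property (a) forces $\zeta(F\circ\Phi)=0$, that is $\int F\,d\mu=0$. But $\{F>0\}$ is an open set containing $y_0\in\supp\mu$, hence has positive $\mu$-measure, giving $\int F\,d\mu>0$ --- a contradiction. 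Therefore $\Phi^{-1}(y_0)$ is non-displaceable, which is exactly the assertion.

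The genuinely hard part is entirely packaged in the construction of $\zeta$ together with properties (a) and (b): showing that spectral invariants descend to a (partial) quasi-state that vanishes on displaceable supports and is additive on Poisson-commuting functions is the Floer-theoretic heart of the Entov--Polterovich theory, and it is where all the symplectic rigidity resides. By contrast, the remaining ingredients --- linearity on the algebra generated by the components of $\Phi$, the Riesz representation, and the elementary compactness argument upgrading ``the fiber is displaceable'' to ``a saturated neighborhood is displaceable'' --- are routine once $\zeta$ is available. An essentially equivalent phrasing would bypass the measure altogether and instead verify that the fibers over $\supp\mu$ are heavy in the sense of superheaviness, non-displaceability then being immediate from the general theory of heavy subsets.
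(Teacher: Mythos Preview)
The paper does not give its own proof of this statement; Theorem~\ref{existence of non-disp fiber} is quoted from \cite{EP06} as background, and the only hint at the argument is the remark that Entov and Polterovich ``introduced the concept of partial symplectic quasi-state'' to prove it. So the comparison is really between your sketch and the axioms for a partial quasi-state that the paper records in Definition~\ref{def:psqs}.

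Against those axioms, your property~(b) is too strong. On an \emph{arbitrary} closed symplectic manifold the spectral construction yields only a \emph{partial} symplectic quasi-state, and the relevant algebraic property is the inequality
\[
\zeta(H_1+H_2)\le\zeta(H_1)+\zeta(H_2)\qquad\text{whenever }\{H_1,H_2\}=0,
\]
listed in Definition~\ref{def:psqs} as ``Quasi-subadditivity'', together with semi-homogeneity $\zeta(sH)=s\zeta(H)$ for $s>0$ only. Full additivity on Poisson-commuting functions would require $\zeta(-H)=-\zeta(H)$, i.e.\ that $\zeta$ arise from a genuine Calabi quasi-morphism; this is known only under extra hypotheses on $\QH(M)$ (e.g.\ a field direct summand), not for all closed $(M,\omega)$. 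Consequently your pushforward $\tau(F)=\zeta(F\circ\Phi)$ is in general merely a monotone, positively homogeneous, \emph{subadditive} functional on $C(\Phi(M))$, and the Riesz representation step producing a probability measure $\mu$ is not justified.

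The argument that goes through with only the partial-quasi-state axioms is a direct partition-of-unity contradiction (this is the Entov--Polterovich argument, and it is exactly the mechanism visible in the paper's proof of the stem-type results such as Theorem~\ref{theorem:X-stem is shv}). If every fiber were displaceable, then by compactness of $\Phi(M)$ and your correct observation that displaceability of a fiber upgrades to displaceability of a saturated neighborhood, one covers $\Phi(M)$ by finitely many open sets $U_1,\dots,U_d$ with each $\Phi^{-1}(U_i)$ displaceable; choosing a subordinate partition of unity $\rho_1,\dots,\rho_d$ and using only quasi-subadditivity and vanishing gives
\[
1=\zeta(1)=\zeta\Bigl(\sum_{i=1}^d\rho_i\circ\Phi\Bigr)\le\sum_{i=1}^d\zeta(\rho_i\circ\Phi)=0,
\]
a contradiction. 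Your identification of where the hard Floer-theoretic content lies is accurate; only the linearity/Riesz passage must be replaced by this inequality-based argument.
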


To prove Theorem \ref{existence of non-disp fiber}, Entov and Polterovich \cite{EP06} introduced the concept of partial symplectic quasi-state (see Definition \ref{def:psqs}).
In \cite{EP09}, they introduced the notion of heaviness of closed subsets in terms of partial symplectic quasi-states.
Let $C_c(M)$ denote the set of continuous functions on $M$ with compact supports.

\begin{definition}[{\cite[Definition 1.3]{EP09}}]\label{def:hv}
Let $\zeta\colon C_c(M)\to\RR$ be a partial symplectic quasi-state on $(M,\omega)$.
A compact subset $X$ of $M$ is said to be \textit{$\zeta$-heavy} (resp.\ \textit{$\zeta$-superheavy}) if
\[
	\zeta(H)\geq\inf_X H \quad \left(\text{resp.}\ \zeta(H)\leq\sup_X H\right)
\]
for any $H\in C_c(M)$.
\end{definition}

Here we collect properties of (super)heavy subsets.

\begin{theorem}[{\cite[Theorem 1.4]{EP09}}]\label{prop:shv is hv}
Let $\zeta\colon C_c(M)\to\RR$ be a partial symplectic quasi-state on $(M,\omega)$.
\begin{enumerate}
\item Every $\zeta$-superheavy subset is $\zeta$-heavy.
\item Every $\zeta$-heavy subset is non-displaceable.
\item Every $\zeta$-heavy subset is non-displaceable from every $\zeta$-superheavy subset.
In particular, every $\zeta$-heavy subset intersects every $\zeta$-superheavy subset.
\end{enumerate}
\end{theorem}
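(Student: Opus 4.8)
The plan is to deduce all three statements straight from the defining axioms of a partial symplectic quasi-state $\zeta$ in Definition~\ref{def:psqs} --- monotonicity, positive homogeneity, subadditivity, invariance under Hamiltonian diffeomorphisms, and vanishing on functions with displaceable support --- combined with the two inequalities of Definition~\ref{def:hv}. Each item is an inequality chase, and the only genuine care is in a couple of cut-off constructions, which is where compactness of the subsets enters.

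For (i), I would first record that positive homogeneity forces $\zeta(0)=0$, so applying subadditivity to $H$ and $-H$ gives
\[
0=\zeta(0)=\zeta\bigl(H+(-H)\bigr)\le\zeta(H)+\zeta(-H),
\]
hence $\zeta(-H)\ge-\zeta(H)$ for every $H\in C_c(M)$. If $X$ is $\zeta$-superheavy, then feeding $-H$ into the superheaviness inequality yields $\zeta(-H)\le\sup_X(-H)=-\inf_X H$. Combining the two bounds gives $-\zeta(H)\le-\inf_X H$, i.e.\ $\zeta(H)\ge\inf_X H$, which is precisely $\zeta$-heaviness.

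For (ii), I would argue by contradiction. Assume the $\zeta$-heavy compact set $X$ is displaceable, so some compactly supported Hamiltonian has time-one map $\varphi$ with $\varphi(X)\cap X=\emptyset$; by compactness this disjointness survives on a neighborhood, giving an open displaceable neighborhood $U\supset X$. Choose $F\in C_c(M)$ with $0\le F\le1$, $F\equiv1$ on $X$, and $\supp F\subset U$. Heaviness gives $\zeta(F)\ge\inf_X F=1$, whereas the vanishing axiom applied to the displaceable set $\supp F$ gives $\zeta(F)=0$, a contradiction. The same cut-off idea proves (iii): if the $\zeta$-heavy set $X$ were displaceable from the $\zeta$-superheavy set $Y$ by $\varphi$, then $X$ and the compact set $K:=\varphi^{-1}(Y)$ would be disjoint, so I could pick $F\in C_c(M)$ with $0\le F\le1$, $F\equiv1$ on $X$, $F\equiv0$ on $K$. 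Heaviness gives $\zeta(F)\ge1$, invariance gives $\zeta(F\circ\varphi^{-1})=\zeta(F)\ge1$, but superheaviness of $Y$ gives $\zeta(F\circ\varphi^{-1})\le\sup_Y(F\circ\varphi^{-1})=\sup_K F=0$, again a contradiction. Finally, the ``in particular'' clause is immediate: were $X\cap Y=\emptyset$, the identity map (generated by $H\equiv0$) would already displace $X$ from $Y$, contradicting the non-displaceability just proved, so $X$ and $Y$ must intersect.

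I do not expect a serious obstacle: once the axioms are granted, everything reduces to sign-chasing and elementary cut-off functions. The point I would treat most carefully is the passage from ``$X$ is displaceable'' to ``$X$ admits a displaceable open neighborhood,'' together with the existence of the separating functions $F$, since these are exactly the places where compactness of the subsets (and hence the hypothesis that they be compact) is used.
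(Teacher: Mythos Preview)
The paper does not actually prove this theorem: it is quoted verbatim from \cite[Theorem~1.4]{EP09} and used as a black box, so there is no ``paper's own proof'' to compare against. Your argument is therefore an independent verification rather than a reconstruction.

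Your proof is essentially correct and follows the standard route. One small technical point to tighten: in part~(i) you invoke quasi-subadditivity on $H$ and $-H$ for an arbitrary $H\in C_c(M)$, but the axiom in Definition~\ref{def:psqs} is stated only for $H_1,H_2\in C_c^\infty(M)$ with $\{H_1,H_2\}=0$. The fix is routine---first prove the heaviness inequality for smooth $H$, then use \textbf{Stability} (which makes $\zeta$ $1$-Lipschitz in the sup norm) together with density of $C_c^\infty(M)$ in $C_c(M)$ to extend. Similarly, your claim $\zeta(0)=0$ is most cleanly justified by the \textbf{Vanishing} axiom (the support of $0$ is empty, hence displaceable) rather than by semi-homogeneity, which is only asserted for $s>0$. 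With these cosmetic adjustments the argument is complete.
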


\subsection{Main results}\label{sec:main results}

In this paper we prove that some classical integrable systems (e.g., Lagrange top and Kovalevskaya top) admit superheavy fibers.
We consider the cotangent bundle $(T^*N,\omega_0)$ of a closed smooth $n$-dimensional manifold $N$
where $\omega_0$ is the standard symplectic form on $T^*N$.
Let $(q,p)$ be canonical coordinates on $T^*N$ where $q\in N$ and $p\in T^*_qN$.
Let $\pi\colon T^*N\to N$ denote the natural projection.

\begin{definition}\label{star}
A (time-independent) Hamiltonian $H\colon T^*N\to\RR$ satisfies \textit{condition $(\star)$} if the following conditions hold.
\begin{enumerate}
	\item For any $c\in\RR$ the sublevel set $H^{-1}\bigl((-\infty,c]\bigr)\subset T^*N$ is compact.
	\item For any $q\in N$, \[H(q,0)=\min_{p\in T_q^*N}H(q,p).\]
\end{enumerate}
\end{definition}

For a Hamiltonian $H\colon T^*N\to\RR$ satisfying condition $(\star)$, we set
\begin{equation}\label{eq:m_H}
	m_H=\max_{q\in N}\min_{p\in T_q^*N}H(q,p)\quad\text{and}\quad%
	S_H=H^{-1}(m_H)\cap 0_N.
\end{equation}

Typical examples of Hamiltonians satisfying condition $(\star)$ are \textit{convex Hamiltonians}
\begin{equation}\label{eq:convex}
	H(q,p)=\frac{1}{2}\| p\|_g^2+U(q),
\end{equation}
where $\|\cdot\|_g$ is the dual norm of a Riemannian metric $g$ on $N$
and $U\colon N\to\RR$ is a smooth potential.
In this case, the value $m_H$ equals the \textit{Ma\~n\'e critical value} $\max_N U$
and
\[
	S_H=\left\{\,(q,0) \in T^*N\relmiddle| U(q) = \max_NU \,\right\}.
\]

In Section \ref{secexam}, we provide classical examples satisfying the assumption of Theorem \ref{main theorem}.

To prove non-displaceability of a fiber of some integrable systems, we use the following partial symplectic quasi-state.
In \cite{Oh97,Oh99}, Oh constructed a spectral invariant on $(T^*N,\omega_0)$ in terms of the Lagrangian Floer theory of the zero-section $0_N$ of $T^*N$.
In \cite{MVZ}, Monzner, Vichery, and Zapolsky constructed a partial symplectic quasi-state on $(T^*N,\omega_0)$,
denoted by $\zeta_{\mathrm{MVZ}}\colon C_c(T^*N)\to\RR$, as the asymptotization of Oh's Lagrangian spectral invariant.
In this paper, the following property of $\zeta_{\mathrm{MVZ}}$ is crucial.

\begin{proposition}[{\cite[Example 1.19]{MVZ}}]\label{zero-sec}
The zero-section $0_N\subset T^*N$ is $\zeta_{\mathrm{MVZ}}$-superheavy.
\end{proposition}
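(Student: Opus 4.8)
The plan is to unwind the definition of $\zeta_{\mathrm{MVZ}}$ and reduce the superheaviness of $0_N$ to a single estimate on Oh's Lagrangian spectral invariant: an upper bound by the supremum of the Hamiltonian over the zero-section. Recall that $\zeta_{\mathrm{MVZ}}$ is the asymptotization (homogenization) of Oh's invariant $\ell$ associated with $0_N$, so that for autonomous $H\in C_c(T^*N)$ one has $\zeta_{\mathrm{MVZ}}(H)=\lim_{k\to\infty}\ell(kH)/k$, the limit existing by the standard subadditivity argument and the functional extending from smooth to continuous $H$ by Lipschitz continuity. By Definition \ref{def:hv}, showing that $0_N$ is $\zeta_{\mathrm{MVZ}}$-superheavy amounts to proving
\[
\zeta_{\mathrm{MVZ}}(H)\le\sup_{0_N}H=\max_{q\in N}H(q,0)\qquad\text{for every }H\in C_c(T^*N).
\]

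First I would record the reduction to a non-homogenized statement. Since $\sup_{0_N}(kH)=k\,\sup_{0_N}H$ for $k>0$, the target inequality follows at once from the pointwise control estimate
\[
\ell(H)\le\sup_{0_N}H\qquad\text{for all }H\in C_c(T^*N):
\]
applying it to $kH$, dividing by $k$, and letting $k\to\infty$ yields $\zeta_{\mathrm{MVZ}}(H)\le\sup_{0_N}H$. Thus everything is reduced to this one ``Lagrangian control from above'' property of $\ell$.

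The heart of the matter, and the step I expect to be the main obstacle, is establishing $\ell(H)\le\sup_{0_N}H$ directly from the construction of $\ell$. My approach would be to read it off the action filtration that defines $\ell$. Writing $c=\sup_{0_N}H$ and using the shift property $\ell(H-c)=\ell(H)-c$, it suffices to show $\ell(G)\le 0$ whenever $G:=H-c$ satisfies $G\le 0$ on $0_N$. By definition $\ell(G)$ is a min--max of the Lagrangian action functional over Floer (equivalently, generating-function) cycles representing the distinguished generator of $\mathrm{HF}(0_N,\varphi_G(0_N))\cong H_*(N)$; the relevant generators are intersection points of $\varphi_G(0_N)$ with $0_N$, and the crucial claim is that their actions are bounded above by the values of $G$ along the zero-section. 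The delicate point is making this action estimate independent of the (possibly large) behaviour of $G$ away from $0_N$, so that the bound genuinely reflects only $\sup_{0_N}G$ rather than a global maximum of $G$; this is precisely where the Lagrangian-specific structure of the zero-section, and not a naive maximum principle, must be exploited.

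In practice, since this statement is \cite[Example 1.19]{MVZ}, I would invoke the Lagrangian control property of $\ell$ proved by Monzner--Vichery--Zapolsky, which for general time-dependent Hamiltonians asserts $\int_0^1\min_{0_N}H_t\,dt\le\ell(H)\le\int_0^1\max_{0_N}H_t\,dt$ and specializes, for autonomous $H$, to $\ell(H)\le\max_{0_N}H=\sup_{0_N}H$. Combined with the reduction above this gives $\zeta_{\mathrm{MVZ}}(H)\le\sup_{0_N}H$, which is exactly the superheaviness of $0_N$; note that the matching lower bound would simultaneously give heaviness, consistent with Theorem \ref{prop:shv is hv}(1).
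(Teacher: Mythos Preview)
The paper does not give its own proof of this proposition; it simply records it as \cite[Example 1.19]{MVZ} and uses it as a black box. Your sketch is correct and is essentially the argument behind that example in \cite{MVZ}: the Lagrangian control property $\int_0^1\min_{0_N}H_t\,dt\le\ell(H)\le\int_0^1\max_{0_N}H_t\,dt$ for Oh's spectral invariant, specialized to autonomous $H$ and then homogenized, immediately yields both the heaviness and the superheaviness of $0_N$.
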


Now we are in a position to state the main result of this paper.

\begin{theorem}\label{main theorem}
Let $N$ be a closed manifold
and $\Phi=(\Phi_1,\ldots,\Phi_k)\colon T^*N\to\RR^k$ a moment map.
Assume that $\Phi_1$ satisfies condition $(\star)$ and that the set $\Phi(S_{\Phi_1})$ is a singleton, i.e.,
$\Phi(S_{\Phi_1})=\{y_0\}$ for some $y_0\in\RR^k$.
Then, the fiber $\Phi^{-1}(y_0)$ of $\Phi$ is $\zeta_{\mathrm{MVZ}}$-superheavy.
\end{theorem}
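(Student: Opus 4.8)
The plan is to reduce the superheaviness of $\Phi^{-1}(y_0)$ to a displaceability statement and then to establish the latter from condition $(\star)$ and the singleton hypothesis. I will use two standard features of the partial symplectic quasi-state $\zeta_{\mathrm{MVZ}}$ (cf.\ Definition \ref{def:psqs}): it is invariant under compactly supported Hamiltonian diffeomorphisms, i.e.\ $\zeta_{\mathrm{MVZ}}(H\circ\psi)=\zeta_{\mathrm{MVZ}}(H)$ for $\psi\in\Ham(T^*N)$ with compact support, and consequently the image of a $\zeta_{\mathrm{MVZ}}$-superheavy set under such $\psi$ is again $\zeta_{\mathrm{MVZ}}$-superheavy (apply the defining inequality to $H\circ\psi$). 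The key reduction is that it suffices to prove: \emph{$0_N$ is displaceable from every compact set $K\subset T^*N$ with $K\cap\Phi^{-1}(y_0)=\emptyset$}; call this ($\dagger$).

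Granting ($\dagger$), let $G\in C_c(T^*N)$ and set $c=\sup_{\Phi^{-1}(y_0)}G$; I must show $\zeta_{\mathrm{MVZ}}(G)\le c$. Fix $\epsilon>0$ and let $K_\epsilon=\{\,G\ge c+\epsilon\,\}$, which is compact and, since $G\le c$ on the fibre, disjoint from $\Phi^{-1}(y_0)$. By ($\dagger$) there is a compactly supported $\psi\in\Ham(T^*N)$ with $\psi(0_N)\cap K_\epsilon=\emptyset$, hence $G<c+\epsilon$ on $\psi(0_N)$. As $0_N$ is $\zeta_{\mathrm{MVZ}}$-superheavy (Proposition \ref{zero-sec}), so is $\psi(0_N)$, and therefore $\zeta_{\mathrm{MVZ}}(G)\le\sup_{\psi(0_N)}G\le c+\epsilon$; letting $\epsilon\to0$ gives $\zeta_{\mathrm{MVZ}}(G)\le c$, proving that $\Phi^{-1}(y_0)$ is $\zeta_{\mathrm{MVZ}}$-superheavy. (It is indeed a legitimate, i.e.\ compact, candidate, being closed in the compact sublevel set $\Phi_1^{-1}((-\infty,m_{\Phi_1}])$, which is compact by $(\star)$(i).)

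To attack ($\dagger$) I unwind the hypotheses. By $(\star)$(ii) one has $\Phi_1(q,0)=\min_{p}\Phi_1(q,p)$, so $m_{\Phi_1}=\max_{0_N}\Phi_1$ and $\Phi_1\le m_{\Phi_1}$ on $0_N$ with equality exactly along $S_{\Phi_1}$; moreover each point of $S_{\Phi_1}$ is a critical point of $\Phi_1$, hence a fixed point of its Hamiltonian flow. The singleton hypothesis says that $y_0$ is the unique point of the compact set $\Phi(0_N)$ whose first coordinate attains $m_{\Phi_1}$, so for every neighbourhood $W$ of $y_0$ there is $\delta>0$ with $\Phi(0_N)\cap\{\,y_1\ge m_{\Phi_1}-\delta\,\}\subset W$. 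The displacing isotopy is then built by flowing $0_N$ transversally into the cotangent fibres: along $0_N$ the flow of a compactly supported cut-off of $\Phi_1$ has velocity $(\dot q,\dot p)=(0,-\partial_q\Phi_1(q,0))$, which moves every regular point of the potential $q\mapsto\Phi_1(q,0)$ off the zero-section while fixing its critical points, and—Poisson-commuting with all $\Phi_i$—preserves $\Phi$, hence the fibre $\Phi^{-1}(y_0)$. Combining this with a fibrewise push and using the coercivity from $(\star)$(i) (large $|p|$ forces $\Phi_1>m_{\Phi_1}$), one seeks to send $0_N$ entirely off $K$: the residue anchored near $S_{\Phi_1}$ stays where $\Phi$ is close to $y_0$, hence clear of $K$ by the singleton hypothesis, while the rest is expelled to energies that $K$ cannot occupy.

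The hard part is precisely the rigorous construction in ($\dagger$) for a general base $N$. The zero-section is itself non-displaceable, and the exactness (flux) constraint forbids lifting it off itself as the graph of a closed one-form, so one must produce genuinely non-graphical compactly supported Hamiltonian isotopies. A further technical point is that $q\mapsto\Phi_1(q,0)$ typically has critical points other than those over $S_{\Phi_1}$ (minima and saddles of the potential), which the $\Phi_1$-flow also fixes and which must be cleared by auxiliary isotopies. The crux is to show that all of this rigidity can be concentrated on the single fibre $\Phi^{-1}(y_0)$: that the coercivity of $(\star)$(i) supplies enough room to push everything else to high energy, and that the singleton condition forces the unavoidable near-$S_{\Phi_1}$ remnant of $0_N$ to map arbitrarily close to $y_0$, so that one compactly supported $\psi$ displaces $0_N$ off the given $K$.
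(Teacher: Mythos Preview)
Your reduction to $(\dagger)$ is valid, but $(\dagger)$ itself is false, so the strategy cannot work. A counterexample already appears in the pendulum (Example~\ref{pendulum}): take $N=S^1$, $\Phi=H$ with $H(q,p)=\tfrac12 p^2+(1-\cos q)$, so that $m_H=2$ and $\Phi^{-1}(y_0)=H^{-1}(2)$ is the figure eight; set $K=H^{-1}(2-\epsilon)\cup H^{-1}(2+\epsilon)$ for small $\epsilon>0$, which is compact and disjoint from $H^{-1}(2)$. Any $\psi(0_{S^1})$ is an embedded essential circle $L$ with $\int_L p\,dq=0$. If $L\cap K=\emptyset$ then, being connected, $L$ lies in one of $\{H<2-\epsilon\}$, $\{2-\epsilon<H<2+\epsilon\}$, or $\{H>2+\epsilon\}$. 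The first is a disc and contains no essential circle. In each component of the third, $L$ and $0_{S^1}$ would cobound an annulus of positive area, forcing $\int_L p\,dq\neq 0$. The middle region is a pair of pants whose area tends to $0$ with $\epsilon$; any embedded essential circle there is parallel to one of the two boundary circles $\{p=\pm\sqrt{2+2\cos q+2\epsilon}\}$, on which $\int p\,dq\to\pm 8$, and by Stokes $\int_L p\,dq$ differs from this by at most the area of the pair of pants, hence is nonzero for small $\epsilon$. Thus $0_{S^1}$ is \emph{not} displaceable from $K$, and the difficulties you flag in the last two paragraphs are not merely technical.

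The paper circumvents $(\dagger)$ by using the moment-map structure that your argument discards. It proves only the much weaker statement that each \emph{fibre} $\Phi^{-1}(y)$, $y\neq y_0$, is displaceable from $0_N$ (Lemma~\ref{maintheorem}, via the graph of a suitably scaled exact one-form); different fibres may require different displacing isotopies, and there is no attempt to displace $0_N$ from a general compact $K$. The passage from ``every other fibre is displaceable from $0_N$'' to ``$\Phi^{-1}(y_0)$ is $\zeta_{\mathrm{MVZ}}$-superheavy'' is Theorem~\ref{theorem:X-stem is shv}, whose proof uses a partition of unity on the target $\RR^k$ together with the quasi-subadditivity of $\zeta_{\mathrm{MVZ}}$ on Poisson-commuting functions. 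By passing to an arbitrary $G\in C_c(T^*N)$ and its superlevel set $K_\epsilon=\{G\ge c+\epsilon\}$, which need not be a union of fibres of $\Phi$, you lose precisely the commutativity that makes the partition-of-unity argument go through, and are forced into the false $(\dagger)$.
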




By Theorem \ref{prop:shv is hv} and Proposition \ref{zero-sec}, the fiber $\Phi^{-1}(y_0)$ is non-displaceable from itself and from the zero-section $0_N$.
Moreover, we can prove that every fiber of $\Phi$, other than $\Phi^{-1}(y_0)$, is displaceable from $0_N$.
To refine Theorem \ref{main theorem}, we introduce the notion of \textit{$X$-stems}.

\begin{definition}[\cite{Ka2}]\label{def:X-stem}
Let $(M,\omega)$ be a symplectic manifold and $X$ a compact subset of $M$.
A compact subset $Y$ of $M$ is called an \textit{$X$-stem}
if there exists a moment map $\Phi=(\Phi_1,\ldots,\Phi_k)\colon M\to\RR^k$ satisfying the following conditions:
\begin{enumerate}
\item $Y=\Phi^{-1}(y_0)$ for some $y_0\in\Phi(M)$.
\item Every fiber of $\Phi$, other than $Y$, is displaceable from itself or from $X$.
\end{enumerate}
\end{definition}

Entov and Polterovich \cite{EP06} introduced the notion of \textit{stems} (i.e., every fiber of $\Phi$, other than $\Phi^{-1}(y_0)$, is displaceable, where $\Phi\colon T^*N\to\RR^k$ is a moment map) and proved that stems are superheavy with respect to any partial symplectic quasi-state \cite[Theorem 1.8]{EP09}.
We note that every stem is an $X$-stem for any compact subset $X$.
We have the following result on $X$-stems.

\begin{theorem} \label{theorem:X-stem is shv}
Let $(M,\omega)$ be a symplectic manifold,
$\zeta\colon C_c(M)\to\RR$ a partial symplectic quasi-state on $(M,\omega)$,
and $X$ a $\zeta$-superheavy subset of $M$.
Then every $X$-stem is $\zeta$-superheavy.
\end{theorem}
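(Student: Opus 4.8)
The plan is to verify the defining inequality of superheaviness directly. Let $\Phi=(\Phi_1,\ldots,\Phi_k)\colon M\to\RR^k$ and $y_0$ be the moment map and value furnished by Definition \ref{def:X-stem}, so that $Y=\Phi^{-1}(y_0)$; I must show $\zeta(H)\le\sup_Y H$ for every $H\in C_c(M)$. After a preliminary normalization (subtracting the constant $\sup_Y H$ when $M$ is closed, or a compactly supported cutoff equal to $1$ on $\supp H\cup Y\cup X$ in general), I may assume $\sup_Y H=0$ and aim to prove $\zeta(H)\le0$. The first real reduction is to pass from $H$ to a function pulled back from the base: setting $g(y)=\sup_{\Phi^{-1}(y)}H$ (rendered continuous and compactly supported by a harmless majorization, at the cost of an $\varepsilon$ controlled by the $C^0$-stability of $\zeta$), one has $H\le g\circ\Phi$, so monotonicity gives $\zeta(H)\le\zeta(g\circ\Phi)$, with $g(y_0)=0$. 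The structural point that makes the argument run is that functions of the form $f\circ\Phi$ Poisson-commute, since $\{f\circ\Phi,h\circ\Phi\}=\sum_{i,j}(\partial_if\,\partial_jh)\{\Phi_i,\Phi_j\}\circ\Phi=0$; this is what lets the quasi-additivity of $\zeta$ act.

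The heart of the proof is a one-sided local estimate over every fiber $\Phi^{-1}(y)$ with $y\ne y_0$. Because displaceability of a compact set is an open condition, there is a neighborhood $U\ni y$ realizing the same displacement, and I treat the two cases of Definition \ref{def:X-stem} as follows. If $\Phi^{-1}(y)$ is displaceable from itself, then $\Phi^{-1}(U)$ is displaceable and the vanishing property of a partial symplectic quasi-state (Definition \ref{def:psqs}) gives $\zeta(F)\le0$ for every $F\in C_c(M)$ supported in $\Phi^{-1}(U)$. If instead $\Phi^{-1}(y)$ is displaceable from $X$, choose $\varphi\in\Ham(M)$ and $U$ with $\varphi\bigl(\Phi^{-1}(\overline U)\bigr)\cap X=\emptyset$; then $F\circ\varphi^{-1}$ vanishes on $X$, so the superheaviness of $X$ (Definition \ref{def:hv}) together with the invariance of $\zeta$ under Hamiltonian diffeomorphisms yields $\zeta(F)=\zeta(F\circ\varphi^{-1})\le\sup_X(F\circ\varphi^{-1})=0$. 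In either case one obtains the uniform bound $\zeta(F)\le0$ for $F$ supported over a neighborhood of $y$.

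To globalize, I cover the compact set $\supp(g)\cup\{y_0\}$ by a small chart $U_0\ni y_0$ on which $g<\varepsilon$ and which is the only chart containing $y_0$, together with finitely many of the displaceable-type neighborhoods $U_1,\ldots,U_m$ above, and choose a subordinate partition of unity, writing $g=\rho_0 g+\sum_{j\ge1}\rho_j g$. Since the summands pull back to pairwise Poisson-commuting functions, quasi-subadditivity of $\zeta$ on commuting functions gives $\zeta(g\circ\Phi)\le\zeta\bigl((\rho_0 g)\circ\Phi\bigr)+\sum_{j\ge1}\zeta\bigl((\rho_j g)\circ\Phi\bigr)$, and every term with $j\ge1$ is $\le0$ by the local estimate (after cutting off to stay in $C_c(M)$ when $\Phi$ is not proper). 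Thus $\zeta(g\circ\Phi)\le\zeta\bigl((\rho_0 g)\circ\Phi\bigr)$, and from $\rho_0 g\le\varepsilon\rho_0$ with $0\le\rho_0\le1$ the inequalities $\zeta(F)\le\sup F$ and positive homogeneity give $\zeta\bigl((\rho_0 g)\circ\Phi\bigr)\le\varepsilon$. Letting $\varepsilon\to0$ yields $\zeta(H)\le0$, hence $\zeta(H)\le\sup_Y H$; non-displaceability of $Y$ from itself and from $X$ then follows from Theorem \ref{prop:shv is hv}.

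I expect the genuinely new difficulty — and the main obstacle — to be the fibers displaceable from $X$: there the exact vanishing $\zeta(F)=0$ of the stem case is replaced by the mere inequality $\zeta(F)\le0$ coming from superheaviness of $X$, so the globalization must be organized to use only the one-sided quasi-subadditivity and never an additivity I do not possess. The second delicate point is the de-normalization when $\sup_Y H<0$ on a non-compact $M$, where one cannot simply add back a constant; I would resolve it by running the dual of the above argument (replacing fiberwise suprema by infima and reversing the inequalities) to establish that $Y$ is also $\zeta$-heavy, which supplies precisely the lower control on $\zeta$ of cutoffs near $Y$ needed to recover the sharp constant $\sup_Y H$.
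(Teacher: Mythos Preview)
Your core argument---cover the image away from $y_0$ by charts whose preimages are displaceable from themselves or from $X$, pull back a subordinate partition of unity, and apply quasi-subadditivity of $\zeta$ to the resulting Poisson-commuting pieces---is exactly the paper's (which refers to \cite[Theorem 2.5]{KO19b} for the details). The differences are presentational, and in two places the paper's route is simply cleaner.

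First, the normalization difficulty you flag for $\sup_Y H<0$ on open $M$ disappears under the standard reduction \cite[Proposition 4.1]{EP09}: $Y$ is $\zeta$-superheavy iff $\zeta(G)=0$ for every $G\in C_c(M)$ with $G\geq 0$ and $G|_Y\equiv 0$. For such $G$ one has $\sup_Y G=0$ automatically, your partition-of-unity argument gives $\zeta(G)\leq 0$, and monotonicity gives $\zeta(G)\geq\zeta(0)=0$. Your proposed detour through first proving heaviness of $Y$ is not needed; even without the reduction, your own cutoff $\chi$ already works once you take $\chi\equiv 1$ on a neighborhood of $K_\zeta\cup\supp H$, so that $\{H,\chi\}=0$ and the Normalization axiom gives $\zeta(c\chi)=c$ for every $c\in\RR$, whence quasi-subadditivity yields $\zeta(H)\leq\zeta(H-c\chi)+c$ directly.

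Second, once one is reduced to $G\geq 0$ with $G|_Y\equiv 0$, the majorant on $\RR^k$ can be chosen to vanish identically on a neighborhood of $y_0$; then no $\varepsilon$-chart $U_0$ is required and every local term in the sum is zero.

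Finally, the ``genuinely new difficulty'' you anticipate for fibers displaceable from $X$ is not actually there: since $\zeta$-superheavy implies $\zeta$-heavy (Theorem~\ref{prop:shv is hv}(i)), the same Hamiltonian-invariance computation also gives $\zeta(F)=\zeta(F\circ\varphi^{-1})\geq\inf_X(F\circ\varphi^{-1})=0$, so one still has exact vanishing $\zeta(F)=0$, just as in the ordinary stem case (this is \cite[Proposition 3.16]{KO19a}). Only the upper bound is used, as you say, but no sharpness is lost.
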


The proof of Theorem \ref{theorem:X-stem is shv} is similar to that of \cite[Theorem 2.5]{KO19b}.
Theorem \ref{theorem:X-stem is shv} refines Theorem \ref{main theorem} as follows.

\begin{theorem}\label{main theorem2}
Let $N$ be a closed manifold
and $\Phi=(\Phi_1,\ldots,\Phi_k)\colon T^*N\to\RR^k$ a moment map.
Assume that $\Phi_1$ satisfies condition $(\star)$ and that the set $\Phi(S_{\Phi_1})$ is a singleton, i.e.,
$\Phi(S_{\Phi_1})=\{y_0\}$ for some $y_0\in\RR^k$.
Then, every fiber of $\Phi$, other than $\Phi^{-1}(y_0)$, is displaceable from the zero-section $0_N$.
In particular, the fiber $\Phi^{-1}(y_0)$ is a $0_N$-stem.
Hence, by Theorem \ref{theorem:X-stem is shv} and Proposition \ref{zero-sec}, $\Phi^{-1}(y_0)$ is $\zeta_{\mathrm{MVZ}}$-superheavy.
\end{theorem}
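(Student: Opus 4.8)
The plan is to prove the displaceability assertion directly and then feed it into the $X$-stem machinery already recorded in Theorem \ref{theorem:X-stem is shv}. Indeed, once we know that every fiber of $\Phi$ other than $\Phi^{-1}(y_0)$ is displaceable from $0_N$, Definition \ref{def:X-stem} shows that $\Phi^{-1}(y_0)$ is a $0_N$-stem; since $0_N$ is $\zeta_{\mathrm{MVZ}}$-superheavy by Proposition \ref{zero-sec}, Theorem \ref{theorem:X-stem is shv} then yields at once that $\Phi^{-1}(y_0)$ is $\zeta_{\mathrm{MVZ}}$-superheavy. So the whole content is the displaceability statement. I would write $u(q)=\Phi_1(q,0)=\min_{p\in T_q^*N}\Phi_1(q,p)$ (the second equality is condition $(\star)$(ii)), so that $m_{\Phi_1}=\max_N u$ and $S_{\Phi_1}=\{(q,0)\mid u(q)=m_{\Phi_1}\}$. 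Fixing a fiber $F=\Phi^{-1}(y)$ with $y\neq y_0$ and setting $c=y_1$, note $F\subset\Phi_1^{-1}(c)$, which is compact by condition $(\star)$(i). I would then split into three cases according to the position of $c$ relative to $m_{\Phi_1}$.

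The two boundary cases are essentially formal. If $c>m_{\Phi_1}$, then every point $(q,0)$ of the zero-section satisfies $\Phi_1(q,0)=u(q)\le m_{\Phi_1}<c$, so $F$ is disjoint from $0_N$ and hence trivially displaceable from it. If $c=m_{\Phi_1}$, then any $(q,0)\in F\cap 0_N$ would satisfy $u(q)=m_{\Phi_1}$, hence $(q,0)\in S_{\Phi_1}$ and $\Phi(q,0)=y_0\neq y$, a contradiction; thus again $F\cap 0_N=\emptyset$. The hypothesis $\Phi(S_{\Phi_1})=\{y_0\}$ is used exactly once, precisely here, to exclude intersections in the critical level.

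The substance is the remaining case $c<m_{\Phi_1}$, for which it suffices to prove the following: \emph{if $\Phi_1$ satisfies condition $(\star)$ and $c<m_{\Phi_1}$, then $\Phi_1^{-1}(c)$ is displaceable from $0_N$} (whence $F\subset\Phi_1^{-1}(c)$ is displaceable as well). I would prove this by a cut-off fibrewise translation. Fix an auxiliary Riemannian metric on $N$ and set $A=\{q\in N\mid u(q)\le c\}$, a \emph{proper} closed subset since $c<\max_N u$. The sublevel set $\Phi_1^{-1}((-\infty,c])$ is compact, so $K=\Phi_1^{-1}((-\infty,c])\cap\pi^{-1}(A)$ is compact and hence fibrewise bounded: there is $R>0$ with $\|p\|>R\Rightarrow\Phi_1(q,p)>c$ for all $q\in A$. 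Next I would choose a smooth $G\colon N\to\RR$ with $\|dG_q\|>R$ for every $q\in A$; such $G$ exists because $A$ is a proper subset of the connected manifold $N$, so one may take a Morse function whose critical points are all pushed into the nonempty open set $N\setminus A$ and then rescale. With this $G$, the graph $q\mapsto(q,-dG_q)$ avoids $\Phi_1^{-1}(c)$: for $q\in A$ one has $\Phi_1(q,-dG_q)>c$ by the choice of $R$, and for $q\notin A$ one has $\Phi_1(q,-dG_q)\ge u(q)>c$ by condition $(\star)$(ii). Hence the fibrewise translation $(q,p)\mapsto(q,p+dG_q)$ carries $\Phi_1^{-1}(c)$ off $0_N$. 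Finally, since $\Phi_1^{-1}(c)$ is compact, the tube swept out by these translation trajectories is compact, so the fibrewise-constant Hamiltonian $G\circ\pi$ generating the translation may be cut off to a compactly supported Hamiltonian agreeing with it near the tube; its time-one map is then a compactly supported Hamiltonian diffeomorphism displacing $\Phi_1^{-1}(c)$ from $0_N$.

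I expect the main obstacle to lie entirely in this last case, and within it in two linked points: producing $G$ with uniformly large differential on the sublevel set $A$, which is where the properness of $A$ (equivalently $c<m_{\Phi_1}$) and the connectedness of $N$ are genuinely used, and verifying that the cut-off preserves the displacing property — one must check that the translation trajectories of the compact set $\Phi_1^{-1}(c)$ stay inside the region where the truncated Hamiltonian equals $G\circ\pi$, so that the compactly supported flow still realizes the fibre translation on $\Phi_1^{-1}(c)$. The trichotomy and the concluding appeal to Theorem \ref{theorem:X-stem is shv} and Proposition \ref{zero-sec} are then routine.
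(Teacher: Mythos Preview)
Your proposal is correct and follows essentially the same route as the paper: the paper likewise reduces everything to the displaceability claim, disposes of the cases $c>m_{\Phi_1}$ and $c=m_{\Phi_1}$ by observing that the fiber then misses $0_N$, and handles the substantive case $c<m_{\Phi_1}$ by exactly the same Contreras-type fibrewise-translation argument---choose a function on $N$ with critical points only where the fibrewise minimum exceeds $c$, rescale so that the graph clears the compact sublevel set, and cut off the generating Hamiltonian $f\circ\pi$---which the paper records separately as Lemma~\ref{maintheorem} (stated in the slightly more general setting of condition~$(\star)_\Sigma$) together with Lemma~\ref{rel graph} for the cut-off.
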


We prove Theorem \ref{main theorem2} in Section \ref{proofmainthm}.
By Theorem \ref{prop:shv is hv}, we see that $\Phi^{-1}(y_0)$ is the unique fiber which is non-displaceable from $0_N$.
On the other hand, it is a natural question to ask whether $\Phi^{-1}(y_0)$ is a stem.
In Conjecture \ref{conjecture mugen}, the authors expect that $\Phi\colon T^*N\to\RR^k$ has infinitely many non-displaceable fibers, in particular, $\Phi^{-1}(y_0)$ is not a stem in a more general situation.
For evidences supporting Conjecture \ref{conjecture mugen}, see Section \ref{sec:Rab}.



Here we provide two other applications of our arguments.

\begin{theorem}\label{secondcorollary}
Let $H_1,\ldots,H_k\colon T^*N\to\RR$ be Hamiltonians satisfying condition $(\star)$ and $\{H_i,H_j\}=0$ for all $1\leq i,j\leq k$.
Then, $\bigcap_{i=1}^k S_{H_i}\neq\emptyset$.
\end{theorem}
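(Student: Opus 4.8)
The plan is to assemble the given Hamiltonians into a single moment map and to identify its distinguished fiber using the superheaviness of the zero-section. Since $\{H_i,H_j\}=0$ for all $i,j$, the map $\Phi=(H_1,\dots,H_k)\colon T^*N\to\RR^k$ is a moment map. Set $y_0=(m_{H_1},\dots,m_{H_k})\in\RR^k$. The first thing I would record is the elementary identity
\[
	\Phi^{-1}(y_0)\cap 0_N=\bigcap_{i=1}^k S_{H_i},
\]
which holds because a point $(q,0)$ lies in $\Phi^{-1}(y_0)$ exactly when $H_i(q,0)=m_{H_i}$ for every $i$, i.e.\ when $(q,0)\in H_i^{-1}(m_{H_i})\cap 0_N=S_{H_i}$ for all $i$. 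Thus the theorem is equivalent to the assertion that the fiber $\Phi^{-1}(y_0)$ meets the zero-section $0_N$.

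The main building block is the one-variable case of Theorem \ref{main theorem2}. For each fixed $i$, the single-component moment map $H_i$ satisfies condition $(\star)$ and has $H_i(S_{H_i})=\{m_{H_i}\}$, a singleton; hence by Theorem \ref{main theorem2} every level set $H_i^{-1}(c)$ with $c\neq m_{H_i}$ is displaceable from $0_N$. Consequently, if $y=(c_1,\dots,c_k)\neq y_0$, then $c_i\neq m_{H_i}$ for some $i$, and since $\Phi^{-1}(y)\subset H_i^{-1}(c_i)$ and displaceability from $0_N$ passes to subsets, the fiber $\Phi^{-1}(y)$ is displaceable from $0_N$. In other words, $\Phi^{-1}(y_0)$ is the only fiber of $\Phi$ that can fail to be displaceable from $0_N$.

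To finish I would produce a fiber of $\Phi$ that is genuinely non-displaceable from $0_N$. By the Entov--Polterovich mechanism behind Theorem \ref{existence of non-disp fiber}, adapted to the partial symplectic quasi-state $\zeta_{\mathrm{MVZ}}$ on $T^*N$, the moment map $\Phi$ admits a $\zeta_{\mathrm{MVZ}}$-heavy fiber $\Phi^{-1}(y_*)$; here condition $(\star)$ is what supplies the properness needed to run the construction on the open manifold $T^*N$. Since $0_N$ is $\zeta_{\mathrm{MVZ}}$-superheavy by Proposition \ref{zero-sec}, Theorem \ref{prop:shv is hv} shows that $\Phi^{-1}(y_*)$ is non-displaceable from $0_N$ and in fact intersects $0_N$. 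By the previous paragraph this forces $y_*=y_0$. Hence $\Phi^{-1}(y_0)$ meets $0_N$, and the displayed identity yields $\bigcap_{i=1}^k S_{H_i}\neq\emptyset$. (Once $y_0\in\Phi(T^*N)$ is known, the displaceability statement also exhibits $\Phi^{-1}(y_0)$ as a $0_N$-stem, so Theorem \ref{theorem:X-stem is shv} upgrades it to a $\zeta_{\mathrm{MVZ}}$-superheavy set, a natural strengthening.)

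I expect the hard part to be precisely the existence of this heavy fiber, equivalently the non-emptiness of $\Phi^{-1}(y_0)$: the displaceability argument eliminates every fiber except $\Phi^{-1}(y_0)$, but it does not by itself guarantee that the corner value $y_0=(m_{H_1},\dots,m_{H_k})$ is attained. Securing this is where the genuine symplectic input lies, namely an Entov--Polterovich-type existence of a heavy fiber for a moment map, carried out for $\zeta_{\mathrm{MVZ}}$ on the non-compact $T^*N$ with $(\star)$ used to localize to a compact region. The corresponding purely topological statement, that $\bigcap_i H_i^{-1}(m_{H_i})$ is non-empty, is false in general, so it is essential that commutativity is used to assemble $H_1,\dots,H_k$ into the single moment map $\Phi$: both the existence result and the very notion of a fiber require a moment map, and without commutativity the componentwise-maximum value $y_0$ need not be attained.
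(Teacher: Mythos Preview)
Your argument is correct and follows essentially the same route as the paper: form the moment map $\Phi=(H_1,\dots,H_k)$, show that every fiber other than $\Phi^{-1}(m_{H_1},\dots,m_{H_k})$ is displaceable from $0_N$, and then invoke an Entov--Polterovich-type existence result to force $\Phi^{-1}(y_0)\cap 0_N\neq\emptyset$. The paper carries out the displaceability step by the same case split (using Lemma~\ref{maintheorem} for $c<m_{H_i}$ and a direct disjointness observation for $c>m_{H_i}$), which is precisely the $k=1$ case of Theorem~\ref{main theorem2} that you quote.

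The one place where you diverge is the existence step: you assert that $\Phi$ admits a $\zeta_{\mathrm{MVZ}}$-\emph{heavy} fiber and then appeal to Theorem~\ref{prop:shv is hv}~(iii). This is stronger than what the paper proves or needs. The paper instead packages the required statement as Corollary~\ref{pshv open}: under the properness hypothesis on $\Phi_1$, some fiber of $\Phi$ is non-displaceable from the $\zeta$-superheavy set $0_N$, and this is obtained by a contradiction argument via $X$-stems (Theorem~\ref{theorem:X-stem is shv}) rather than by producing a heavy fiber directly. Since non-displaceability from $0_N$ already implies intersection with $0_N$, Corollary~\ref{pshv open} is exactly the black box you want, and citing it avoids the need to justify the heavy-fiber claim in the non-compact setting.
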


For example, the functions $H$ and $G$ in Example \ref{neu} (C. Neumann problem) satisfy condition $(\star)$ and one can confirm that $S_H\cap S_G\neq\emptyset$.
As another example, the functions $H$ and $G$ in Example \ref{Clebsch} (Clebsch top) also satisfy condition $(\star)$ and we have $S_H\cap S_G\neq\emptyset$.
We prove Theorem \ref{secondcorollary} in Section \ref{proofcor}.
The authors do not know another proof of this misterious theorem without using the Floer theory.

\begin{proposition}\label{mainprop}
Let $\Phi=(\Phi_1,\ldots,\Phi_k)\colon T^*N\to\RR^k$ be a moment map.
Assume that $\Phi_1$ satisfies condition $(\star)$ and that the set $\Phi(S_{\Phi_1})$ is a singleton, i.e.,
$\Phi(S_{\Phi_1})=\{y_0\}$ for some $y_0\in\RR^k$.
Then, $\pi\bigl(\Phi^{-1}(y_0)\bigr)=N$.
\end{proposition}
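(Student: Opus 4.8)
The plan is to deduce the surjectivity of $\pi|_{\Phi^{-1}(y_0)}$ from the $\zeta_{\mathrm{MVZ}}$-superheaviness of the fiber together with an explicit evaluation of $\zeta_{\mathrm{MVZ}}$ on functions pulled back from the base. Write $X:=\Phi^{-1}(y_0)$. Since $S_{\Phi_1}\subset X$ and $\Phi(S_{\Phi_1})=\{y_0\}$, the first coordinate of $y_0$ equals $m_{\Phi_1}$, so $X\subset\Phi_1^{-1}(m_{\Phi_1})$, which is compact by condition~$(\star)$(i); likewise $(\star)$(ii) gives $0_N\subset\Phi_1^{-1}\bigl((-\infty,m_{\Phi_1}]\bigr)$. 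In particular $X$ is compact, hence so is $\pi(X)\subset N$, and by Theorem~\ref{main theorem2} the set $X$ is $\zeta_{\mathrm{MVZ}}$-superheavy. It therefore suffices to prove that $\max_{\pi(X)}f=\max_N f$ for every $f\in C(N)$: granting this, if $\pi(X)\neq N$ one could pick $q_*\in N\setminus\pi(X)$ (an open set) and $f\in C(N)$ with $0\le f\le1$, $f(q_*)=1$ and $\supp f\cap\pi(X)=\emptyset$, whence $1=\max_N f=\max_{\pi(X)}f=0$, a contradiction; thus $\pi(X)=N$.

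To establish $\max_{\pi(X)}f=\max_N f$, fix $f\in C(N)$ and choose $c\ge m_{\Phi_1}$, so that the compact sublevel set $K:=\Phi_1^{-1}\bigl((-\infty,c]\bigr)$ contains both $0_N$ and $X$. Let $H_f\in C_c(T^*N)$ be any function equal to $f\circ\pi$ on a neighborhood of $K$ and cut off to compact support outside. The key computation is
\[
	\zeta_{\mathrm{MVZ}}(H_f)=\max_N f .
\]
This is precisely the behaviour of Oh's Lagrangian spectral invariant on basic functions underlying Proposition~\ref{zero-sec}: the Hamiltonian flow of $f\circ\pi$ shears the cotangent fibers while fixing the base, its time-one map carries $0_N$ to the graph of $df$ (up to sign), which meets $0_N$ exactly over $\mathrm{Crit}(f)$ with action value $f(q)$, and the invariant attached to the fundamental class selects the maximal such value; after homogenization one gets $\zeta_{\mathrm{MVZ}}(H_f)=\max_N f$. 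Since the cut-off alters $H_f$ only far from $0_N$ in the fiber direction, it does not affect this action value, so the right-hand side is independent of the auxiliary choices.

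With this in hand the argument closes immediately. Superheaviness of $X$ applied to $H_f$ yields
\[
	\max_N f=\zeta_{\mathrm{MVZ}}(H_f)\le\sup_X H_f=\max_{\pi(X)}f\le\max_N f ,
\]
forcing equality for every $f\in C(N)$, which is exactly what remained to be shown.

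I expect the genuine obstacle to be the displayed identity $\zeta_{\mathrm{MVZ}}(H_f)=\max_N f$ and, above all, its independence of the cut-off. Abstract (super)heaviness of $0_N$ and of $X$ gives only the inequalities $\zeta_{\mathrm{MVZ}}(H_f)\le\max_N f$ and $\zeta_{\mathrm{MVZ}}(H_f)\le\max_{\pi(X)}f$ (together with the trivial lower bounds), and these never combine into the required equality. Pinning down the \emph{exact} value is therefore where the role of the zero-section as the defining Lagrangian of $\zeta_{\mathrm{MVZ}}$ becomes essential, and the careful bookkeeping of the action values of the cut-off Hamiltonian is the step I would treat most carefully.
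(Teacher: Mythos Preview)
Your strategy is different from the paper's, and the gap you yourself flag is the real one: the identity $\zeta_{\mathrm{MVZ}}(H_f)=\max_N f$ for a compactly supported cut-off of $f\circ\pi$ cannot be read off from the axioms of a partial symplectic quasi-state or from superheaviness of $0_N$. The claim is in fact true, but justifying it (and its independence of the cut-off) sends you back into Oh's Lagrangian spectral invariant and the action-filtration bookkeeping you allude to; without that input the argument is incomplete, and the heuristic you give about the flow of $f\circ\pi$ is not a proof.

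The paper avoids this computation altogether. It proves the general Proposition~\ref{maintopo} (every $\zeta_{\mathrm{MVZ}}$-superheavy $X\subset T^*N$ has $\pi(X)=N$) by an elementary displacement argument (Proposition~\ref{non-surj}): if $\pi(X)\subsetneq N$, choose a smooth $f\colon N\to\RR$ whose critical points all lie in the open set $N\setminus\pi(X)$, so that $df_q\neq0$ for every $q\in\pi(X)$; by compactness of $X$ there is $R_0>0$ with $R_0\,df_q\neq-p$ for all $(q,p)\in X$, whence $\Gamma_{R_0f}(X)\cap 0_N=\emptyset$, i.e.\ $X$ is displaceable from $0_N$ (Lemma~\ref{rel graph}). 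But $0_N$ is $\zeta_{\mathrm{MVZ}}$-superheavy (Proposition~\ref{zero-sec}), so Theorem~\ref{prop:shv is hv}(iii) forbids a $\zeta_{\mathrm{MVZ}}$-superheavy $X$ from being displaceable from it. This route uses only the abstract intersection property of heavy and superheavy sets, never the exact value of $\zeta_{\mathrm{MVZ}}$ on any Hamiltonian, and so sidesteps precisely the obstacle you identified.
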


When $k=1$, the proof of Proposition \ref{mainprop} is straightforward by the definition of $m_{\Phi_1}$.
Proposition \ref{mainprop} follows immediately from Theorem \ref{main theorem} and the following proposition.

\begin{proposition}\label{maintopo}
If $X$ is a $\zeta_{\mathrm{MVZ}}$-superheavy subset of $T^*N$, then $\pi(X)=N$.
\end{proposition}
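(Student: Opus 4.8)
The plan is to argue by contradiction, exploiting the intersection property of heavy and superheavy sets (Theorem \ref{prop:shv is hv}(3)) together with the superheaviness of the zero-section (Proposition \ref{zero-sec}). Since $X$ is $\zeta_{\mathrm{MVZ}}$-superheavy it is in particular compact, so $K:=\pi(X)$ is a compact subset of $N$, and after fixing a Riemannian metric $g$ there is $R>0$ with $X\subset\{(q,p)\in T^*N : \|p\|_g\le R\}$. Suppose $\pi(X)\ne N$; then $U:=N\setminus K$ is a nonempty open set. The aim is to produce a Lagrangian section---the graph $\Gamma_{df}=\{(q,df_q) : q\in N\}$ of an exact $1$-form---that is $\zeta_{\mathrm{MVZ}}$-superheavy yet entirely disjoint from $X$, which will contradict the fact that the heavy set $X$ must meet every superheavy set.

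First I would carry out the topological step: constructing $f\in C^\infty(N)$ whose differential is nonvanishing and of large norm on $K$. Assume $N$ connected (as in the applications $N=S^2$ and $N=\SO(3)$; the general case is handled componentwise). Starting from a Morse function $g$ with finite critical set $\{p_1,\dots,p_m\}$, I would choose distinct points $x_1,\dots,x_m\in U$ and, using the transitivity of the isotopy group of $N$ on such finite configurations, find a diffeomorphism $\phi$ isotopic to the identity with $\phi(p_i)=x_i$; then $f_0:=g\circ\phi^{-1}$ has critical set $\{x_i\}\subset U$, so $df_0$ is nowhere zero on $K$. As $K$ is compact, $\delta:=\min_{q\in K}\|(df_0)_q\|_g>0$, and replacing $f_0$ by $cf_0$ for $c>R/\delta$ yields $f$ with $\|df_q\|_g>R$ for every $q\in K$. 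Consequently $\Gamma_{df}$ is disjoint from $X$: over $K$ the fiber part $df_q$ has norm $>R$ while $X$ lies in the radius-$R$ disk bundle, and over $U$ the set $X$ has no points at all.

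The symplectic input is that $\Gamma_{df}$ is $\zeta_{\mathrm{MVZ}}$-superheavy. I would deduce this from Proposition \ref{zero-sec} by observing that $\Gamma_{df}=\tau_{df}(0_N)$, where $\tau_{df}\colon(q,p)\mapsto(q,p+df_q)$ is the fiberwise translation by $df$; this is the time-$1$ map of the (non-compactly supported) Hamiltonian pulled back from the base, and I would invoke the invariance of $\zeta_{\mathrm{MVZ}}$ under such fiberwise translations by exact $1$-forms coming from the construction in \cite{MVZ}. Granting $\zeta_{\mathrm{MVZ}}(H\circ\tau_{df})=\zeta_{\mathrm{MVZ}}(H)$ for all $H\in C_c(T^*N)$, superheaviness transports along $\tau_{df}$: $\zeta_{\mathrm{MVZ}}(H)=\zeta_{\mathrm{MVZ}}(H\circ\tau_{df})\le\sup_{0_N}(H\circ\tau_{df})=\sup_{\Gamma_{df}}H$, so $\Gamma_{df}$ is superheavy. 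Since $X$ is superheavy it is heavy by Theorem \ref{prop:shv is hv}(1), and Theorem \ref{prop:shv is hv}(3) then forces $X\cap\Gamma_{df}\ne\emptyset$, contradicting the previous paragraph. Hence $\pi(X)=N$.

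I expect the main obstacle to be this last symplectic input, namely justifying that $\zeta_{\mathrm{MVZ}}$ is unchanged under fiberwise translation by an exact $1$-form (equivalently, that graphs of exact $1$-forms inherit superheaviness from the zero-section). Because the generating Hamiltonian $\pi^*f$ is not compactly supported, one cannot quote Hamiltonian invariance of partial symplectic quasi-states off the shelf; instead I would trace through Oh's Lagrangian spectral invariants and their asymptotization in \cite{MVZ}, where the translation alters the relevant action data only by a term that is washed out under the homogenization. The topological construction of $f$ is routine by comparison, the only mild subtlety being the connectedness hypothesis needed to sweep all critical points into $U$.
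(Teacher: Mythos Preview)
Your topological construction---choosing $f$ with all critical points in $N\setminus\pi(X)$ and scaling---is exactly what the paper does. The divergence is in the symplectic step, and here your proposal has a genuine gap that you yourself flag: you need $\Gamma_{df}$ to be $\zeta_{\mathrm{MVZ}}$-superheavy, but the fiberwise translation $\tau_{df}$ is generated by the non-compactly supported Hamiltonian $\pi^*f$, so the Hamiltonian invariance axiom of Definition~\ref{def:psqs} does not apply. Your proposed workaround (tracing through Oh's spectral invariants and the homogenization in \cite{MVZ}) may well succeed, but it is not carried out and would require real work.

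The paper sidesteps this entirely by moving $X$ rather than $0_N$. This is Lemma~\ref{rel graph}: because $X$ is compact, the set $\bigcup_{t\in[0,1]}\Gamma_{tf}(X)$ is bounded, so one can multiply $\pi^*f$ by a cutoff $\rho$ equal to $1$ on a neighborhood of this set and obtain a \emph{compactly supported} Hamiltonian whose time-one map sends $X$ to $\Gamma_f(X)$. Scaling $f$ so that $\Gamma_{R_0f}(X)\cap 0_N=\emptyset$ (which is your same estimate, just read from the other side) then shows $X$ is displaceable from $0_N$ in the honest sense. Since $0_N$ is $\zeta_{\mathrm{MVZ}}$-superheavy (Proposition~\ref{zero-sec}) and $X$ is superheavy hence heavy, Theorem~\ref{prop:shv is hv}(iii) gives the contradiction directly---no need to know anything about superheaviness of $\Gamma_{df}$. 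The moral: when one of the two sets in an intersection argument is compact and the other is only the zero-section, displace the compact one.
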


We prove Proposition \ref{maintopo} in Section \ref{proofprop}.


\section{Applications}

In this section, we deal with some classical integrable systems satisfying the assumption of Theorem \ref{main theorem} and detect superheavy fibers of them.

\subsection{Relationship with Ma\~n\'e's critical values}\label{sec:Mane}
We provide an application of our main theorem when a moment map is a function.

Let $(N,g)$ be a closed Riemannian manifold.
We equip the cotangent bundle $T^*N$ with the standard symplectic form $\omega_0$.
In the context of Ma\~n\'e's critical values, Cieliebak, Frauenfelder, and Paternain \cite{CFP} proved the following theorem.

\begin{theorem}[{\cite[Theorem 1.2]{CFP}}]\label{thm:CFP}
Let $(N,g)$ be a closed Riemannian manifold
and $H\colon T^*N\to\RR$ a convex Hamiltonian $($see \eqref{eq:convex} for the definition$)$.
Then, the level set $H^{-1}(m_H)\subset T^*N$ is non-displaceable.
\end{theorem}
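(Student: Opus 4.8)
The plan is to recognize Theorem \ref{thm:CFP} as the special case $k=1$ of Theorem \ref{main theorem}, taking the moment map to be the convex Hamiltonian $H$ itself. First I would check that a convex Hamiltonian $H(q,p)=\frac{1}{2}\|p\|_g^2+U(q)$ satisfies condition $(\star)$. For condition (i), on a sublevel set $H^{-1}((-\infty,c])$ one has $\frac{1}{2}\|p\|_g^2=H(q,p)-U(q)\leq c-\min_N U$, so the fiber variable $p$ is confined to a disc bundle of fixed radius over the compact base $N$; being a closed subset of this compact disc bundle, the sublevel set is compact. For condition (ii), since $\frac{1}{2}\|p\|_g^2\geq 0$ with equality exactly at $p=0$, we get $H(q,0)=U(q)=\min_{p\in T_q^*N}H(q,p)$ for every $q\in N$.

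Next I would identify the invariants attached to $H$. Condition (ii) gives $\min_{p}H(q,p)=U(q)$, whence $m_H=\max_{q\in N}U(q)=\max_N U$ is the Ma\~n\'e critical value, and $S_H=H^{-1}(m_H)\cap 0_N=\{(q,0) : U(q)=\max_N U\}$, exactly as recorded after \eqref{eq:m_H} and \eqref{eq:convex}. Since $N$ is compact, $U$ attains its maximum, so $S_H\neq\emptyset$. Now view $\Phi=H$ as a moment map with $k=1$; the single bracket $\{H,H\}=0$ holds trivially, so $\Phi$ is a moment map in the sense of the paper. The crucial point is that the singleton hypothesis of Theorem \ref{main theorem} is automatic in the one-function case: by construction $S_H\subset H^{-1}(m_H)$, so every $(q,0)\in S_H$ satisfies $\Phi(q,0)=H(q,0)=U(q)=m_H$, giving $\Phi(S_H)=\{m_H\}$.

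With $y_0=m_H$, the hypotheses of Theorem \ref{main theorem} are therefore met, and that theorem yields that $\Phi^{-1}(m_H)=H^{-1}(m_H)$ is $\zeta_{\mathrm{MVZ}}$-superheavy. By Theorem \ref{prop:shv is hv}(i) this set is $\zeta_{\mathrm{MVZ}}$-heavy, and by Theorem \ref{prop:shv is hv}(ii) it is non-displaceable, which is precisely the assertion of Theorem \ref{thm:CFP}.

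Because all the analytic difficulty is absorbed into Theorem \ref{main theorem}, I do not expect any genuine obstacle at this stage; the only item warranting care is the verification that the singleton condition $\Phi(S_{\Phi_1})=\{y_0\}$ is truly automatic when $k=1$, which it is, for the reason noted above. I would also remark that the same argument delivers the stronger, and perhaps more suggestive, conclusion advertised in the abstract: combining superheaviness with Proposition \ref{zero-sec} and Theorem \ref{prop:shv is hv}(iii) shows that the singular level set $H^{-1}(m_H)$ is in fact non-displaceable from the zero-section $0_N$.
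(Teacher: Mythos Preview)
Your proposal is correct and matches the paper's own route: Theorem~\ref{thm:CFP} is quoted from \cite{CFP} (where it is obtained via Rabinowitz Floer theory, see Remark~\ref{non-disp regular CFP}), but the paper recovers it exactly as you do, as the $k=1$ instance of Theorem~\ref{main theorem}; this is precisely the content of Corollary~\ref{cor:CFP}, including your final remark about non-displaceability from $0_N$.
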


As a corollary of our main theorem (Theorem \ref{main theorem}),
we can prove that the level set $H^{-1}(m_H)$ in Theorem \ref{thm:CFP} is non-displaceable from the zero-section $0_N$ in a more general setting.

\begin{corollary}\label{cor:CFP}
Let $N$ be a closed manifold
and $H\colon T^*N\to\RR$ a Hamiltonian satisfying condition $(\star)$.
Then, the level set $H^{-1}(m_H)\subset T^*N$ is non-displaceable from itself and from the zero-section $0_N$.
\end{corollary}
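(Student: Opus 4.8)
The plan is to deduce this from Theorem~\ref{main theorem} in the special case $k=1$, taking the moment map to be the single Hamiltonian itself, $\Phi=\Phi_1=H\colon T^*N\to\RR$. First I would observe that $\Phi=H$ is indeed a moment map: for $k=1$ the only Poisson-bracket condition to check is $\{H,H\}=0$, which holds trivially, and $H$ satisfies condition $(\star)$ by hypothesis.

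Next I would verify that $\Phi(S_{\Phi_1})$ is a singleton. By condition $(\star)$(ii) we have $\min_{p\in T_q^*N}H(q,p)=H(q,0)$, so $m_H=\max_{q\in N}H(q,0)$. Since $N$ is closed and $q\mapsto H(q,0)$ is continuous, this maximum is attained at some $q_0\in N$; then $(q_0,0)\in H^{-1}(m_H)\cap 0_N=S_H$, so $S_H\neq\emptyset$. Moreover every point of $S_H$ lies in $H^{-1}(m_H)$, whence $H(S_H)=\{m_H\}$. Thus the hypotheses of Theorem~\ref{main theorem} are satisfied with $y_0=m_H$, and we conclude that $\Phi^{-1}(m_H)=H^{-1}(m_H)$ is $\zeta_{\mathrm{MVZ}}$-superheavy.

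Finally I would unwind the (super)heaviness via Theorem~\ref{prop:shv is hv}. Part~(1) gives that the superheavy set $H^{-1}(m_H)$ is $\zeta_{\mathrm{MVZ}}$-heavy, and part~(2) then yields that it is non-displaceable from itself. For non-displaceability from the zero-section, recall that $0_N$ is $\zeta_{\mathrm{MVZ}}$-superheavy by Proposition~\ref{zero-sec}; applying part~(3) to the heavy set $H^{-1}(m_H)$ and the superheavy set $0_N$ shows that $H^{-1}(m_H)$ is non-displaceable from $0_N$, as desired. I do not anticipate a genuine obstacle here: the substantive content is packaged in Theorem~\ref{main theorem}, and the corollary merely amounts to checking that a single Hamiltonian satisfying condition $(\star)$ fits the $k=1$ instance of that theorem. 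The only point requiring a little care is confirming that $S_H$ is nonempty and that $H$ collapses it to the single value $m_H$, both of which follow directly from condition $(\star)$ together with the compactness of $N$.
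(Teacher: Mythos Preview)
Your proposal is correct and follows exactly the route the paper intends: the corollary is stated immediately after Theorem~\ref{main theorem} as a direct application with $k=1$ and $\Phi=H$, and the paper has already noted (right after Theorem~\ref{main theorem}) that Theorem~\ref{prop:shv is hv} together with Proposition~\ref{zero-sec} yields the non-displaceability conclusions. Your verification that $S_H\neq\emptyset$ and $H(S_H)=\{m_H\}$ is the only small point left implicit in the paper, and you handle it correctly.
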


\begin{remark}\label{non-disp regular CFP}
Actually, Cieliebak, Frauenfelder, and Paternain \cite{CFP} proved the non-displaceability of $H^{-1}(c)$ for any $c>m_H$ using the Rabinowitz Floer theory.
Hence they obtained Theorem \ref{thm:CFP} as its corollary.
On the other hand, as stated in Theorem \ref{main theorem2}, $H^{-1}(c)$ is displaceable from $0_N$ for any $c\neq m_H$.
\end{remark}

\begin{example}[Pendulum]\label{pendulum}
The pendulum is the Hamiltonian system with one degree of freedom on the cotangent bundle $T^*S^1$ of the unit circle $S^1=\RR/2\pi\ZZ$.
We define a function $H\colon T^*S^1\to\RR$ by
\[
	H(q,p)=\frac{1}{2}p^2+(1-\cos q).
\]
Then, $H$ satisfies condition $(\star)$ and
\[
	m_H=\max_{q\in S^1}{(1-\cos q)}=2.
\]
By Theorem \ref{main theorem},
the level set $H^{-1}(2)\subset T^*S^1$ is $\zeta_{\mathrm{MVZ}}$-superheavy.
$H^{-1}(2)$ is homeomorphic to the figure eight.
Note that the $\zeta_{\mathrm{MVZ}}$-superheaviness of $H^{-1}(2)$ also follows from \cite[Proposition 1.22]{MVZ}.
\end{example}


\subsection{Classical integrable systems}\label{secexam}

\begin{example}[Spherical pendulum]\label{ex:sph pen}
The spherical pendulum \cite{La} describes a motion of a particle moving on the unit two-sphere
\begin{equation}\label{eq:S2}
	S^2=\left\{\, q=(q_1,q_2,q_3)\in\RR^3\relmiddle| q_1^2+q_2^2+q_3^2=1\,\right\}
\end{equation}
under a gravitational force.
Let $g_0$ denote the standard Riemannian metric on $S^2$.
We define functions $\underline{H},\underline{G}\colon TS^2\to\RR$ by
\[
	\underline{H}(q,v)=\frac{1}{2}\| v\|_{g_0}^2+q_3\quad\text{and}\quad%
	\underline{G}(q,v)=q_1v_2-q_2v_1
\]
for $(q,v)=(q_1,q_2,q_3,v_1,v_2,v_3)\in TS^2\subset T\RR^3\cong\RR^3\times\RR^3$, respectively.
Let $\Psi\colon TS^2\to T^*S^2$ denote the Legendre transformation of $\underline{H}$.
We then define functions on $T^*S^2$ by $H=\underline{H}\circ\Psi^{-1}$ and $G=\underline{G}\circ\Psi^{-1}$.
Then, $\{H,G\}=0$ and the function $H$ satisfies condition $(\star)$.
We set $\Phi=(H,G)\colon T^*S^2\to\RR^2$.
Since $S_H=\{(0,0,1,0,0,0)\}$, we have $\Phi(S_H)=\{(1,0)\}$.
By Theorem \ref{main theorem},
the fiber $\Phi^{-1}(1,0)\subset T^*S^2$ is $\zeta_{\mathrm{MVZ}}$-superheavy.
In particular, $\Phi^{-1}(1,0)$ is non-displaceable from itself and from $0_{S^2}$.
We note that the value $(1,0)$ corresponds to the focus-focus singularity of this system
and the fiber $\Phi^{-1}(1,0)$ is homeomorphic to the two-dimensional torus pinched at a single point (see \cite[Section I\hspace{-.1em}V.3.4]{CB}).
\end{example}

\begin{remark}\label{Pol torus}
Brendel, Kim, and Schlenk \cite{BKS} proved that the fiber $\Phi^{-1}(c,0)$ is non-displaceable for any $c>1$.
Thus, the non-displaceability of $\Phi^{-1}(1,0)$ immediately follows.
On the other hand, as stated in Theorem \ref{main theorem2}, $\Phi^{-1}(c,0)$ is displaceable from $0_{S^2}$ for any $c>1$.

The authors do not know whether there exist a Hamiltonian $H$ satisfying condition ($\star$) and a real number $c$ with $c>m_H$ such that $H^{-1}(c)$ is displaceable.
\end{remark}

\begin{example}[C. Neumann problem]\label{neu}

Let $a_1,a_2,a_3$ be positive numbers satisfying $a_1<a_2<a_3$.
Let $S^2\subset\RR^3$ denote the unit two-sphere as in \eqref{eq:S2}.
In \cite{Neu}, C. Neumann introduced a Hamiltonian system on $T^*S^2$
which describes the motion of a particle on the unit two-sphere $S^2$
under the influence of the linear force $-(a_1q_1,a_2q_2,a_3q_3)$.
We define functions $\underline{H},\underline{G}\colon TS^2\to\RR$ by
\[
	\underline{H}(q,v)=\frac{1}{2}\|v\|_{g_0}^2+\frac{1}{2}(a_1q_1^2+a_2q_2^2+a_3q_3^2)
\]
and
\[
	\underline{G}(q,v)=\frac{1}{2}\sum_{i=1}^3 a_iv_i^2+\frac{1}{2}\|v\|_{g_0}^2\sum_{i=1}^3 a_iq_i^2+\frac{1}{2}\sum_{i=1}^3 a_i^2 q_i^2
\]
for $(q,v)=(q_1,q_2,q_3,v_1,v_2,v_3)\in TS^2\subset T\RR^3\cong\RR^3\times\RR^3$, respectively.
Let $\Psi\colon TS^2\to T^*S^2$ denote the Legendre transformation of $\underline{H}$.
We then define functions $H, G\colon T^*S^2\to\RR$ by $H=\underline{H}\circ\Psi^{-1}$ and $G=\underline{G}\circ\Psi^{-1}$.
Then, $\{H,G\}=0$ and the function $H$ satisfies condition $(\star)$.
We set $\Phi=(H,G)\colon T^*S^2\to\RR^2$.
Since $S_H=\{(0,0,\pm 1,0,0,0)\}$, we have $\Phi(S_H)=\{(a_3/2,a_3^2/2)\}$.
By Theorem \ref{main theorem},
the fiber $\Phi^{-1}(a_3/2,a_3^2/2)\subset T^*S^2$ is $\zeta_{\mathrm{MVZ}}$-superheavy.
\end{example}


\subsubsection{Spinning tops}\label{sec:tops}

We consider the motion of tops.
Let $q_1\cdot q_2$ (resp.\ $q_1\times q_2$) denote the dot (resp.\ cross) product of $q_1$ and $q_2$ in $\RR^3$.
Let
\[
	\SO(3)=\left\{\,(q_1,q_2,q_3)\in\mathrm{M}_3(\RR)\relmiddle| q_1,q_2,q_3\in S^2,\ q_1\cdot q_2=0,\ q_3=q_1\times q_2\,\right\}
\]
denote the three-dimensional rotation group,
where $S^2\subset\RR^3$ is the unit two-sphere as in \eqref{eq:S2}.
Let $(e_1,e_2,e_3)$ denote the identity matrix.
Given a point $(q_1,q_2,q_3)\in\SO(3)$, we set $n_i=q_i\cdot e_3$ for each $i=1,2,3$.

Let $(q,\omega)=(q_1,q_2,q_3,\omega_1,\omega_2,\omega_3)$ be the canonical coordinates on the tangent bundle $T\SO(3)$ defined in terms of the angular velocity (see, for example, \cite[Section 26]{Ar}).
Let $0_{\SO(3)}$ denote the zero-section of $T^*\SO(3)$.

Let $I_1$, $I_2$, $I_3$ be positive numbers and $f\colon [-1,1]^3\to\RR$ a smooth function.
We define functions $\underline{H},\underline{L_z}\colon T\SO(3)\to\RR$ by
\begin{equation}\label{eq:top H}
	\underline{H}(q,\omega)=\frac{1}{2}(I_1\omega_1^2+I_2\omega_2^2+I_3\omega_3^2)+(f\circ\nu)(q_1,q_2,q_3)
\end{equation}
and
\begin{equation}\label{eq:top L}
	\underline{L_z}(q,\omega)=I_1n_1\omega_1+I_2n_2\omega_2+I_3n_3\omega_3,
\end{equation}
respectively,
where $\nu\colon\SO(3)\to [-1,1]^3$ is the map defined by $\nu(q_1,q_2,q_3)=(n_1,n_2,n_3)$.

Let $\Psi\colon T\SO(3)\to T^*\SO(3)$ denote the Legendre transformation of $\underline{H}$.
Note that $\Psi\colon T\SO(3)\to T^*\SO(3)$ is the metric dual operation with respect to the Riemannian metric $g$ on $\SO(3)$
defined by
\[
 g_q(\omega,\omega')=I_1\omega_1\omega'_1+I_2\omega_2\omega'_2+I_3\omega_3\omega'_3
\]
for $q\in\SO(3)$ and $\omega=(\omega_1,\omega_2,\omega_3)$, $\omega'=(\omega'_1,\omega'_2,\omega'_3)\in T_q\SO(3)$.

We then define functions on $T^*\SO(3)$ by $H=\underline{H}\circ\Psi^{-1}$ and $L_z=\underline{L_z}\circ\Psi^{-1}$.
Then, $\{H,L_z\}=0$ and the function $H$ satisfies condition $(\star)$.
We note that
\begin{equation}\label{eq:top SH}
	S_H=\left\{\,(q,0)\in T^*\SO(3)\relmiddle| H(q,0)=\max_{\SO(3)}f\circ\nu\,\right\}.
\end{equation}
Hence $L_z(S_H)\subset L_z(0_{\SO(3)})=\{0\}$.


\begin{example}\label{general top}
We set $\Phi=(H,L_z)\colon T^*\SO(3)\to\RR^2$.
Then,
\[
	\Phi(S_H)=\left\{\left(\max_{\SO(3)}{f\circ\nu},0\right)\right\}.
\]
By Theorem \ref{main theorem}, the fiber $\Phi^{-1}(\max_{\SO(3)}{f\circ\nu},0)$ is $\zeta_{\mathrm{MVZ}}$-superheavy.
\end{example}

\begin{example}[Lagrange top]\label{rigidlag}
The \textit{Lagrange top} \cite{La,Ar} is a top such that $I_1=I_2$ and $f(x,y,z)=cz$ for some real number $c$.
We define another function $\underline{G}\colon T\SO(3)\to\RR$ by
\[
	\underline{G}(q,\omega)=I_3\omega_3,
\]
and set $G=\underline{G}\circ\Psi^{-1}$.
Then, $\{H,G\}=0$ and $\{L_z,G\}=0$.
We set $\Phi=(H,L_z,G)\colon T^*\SO(3)\to\RR^3$.
By \eqref{eq:top SH}, $H(S_H)=\{\lvert c\rvert\}$ and $G(S_H)=\{0\}$.
Therefore, $\Phi(S_H)=\{(\lvert c\rvert,0,0)\}$.
By Theorem \ref{main theorem}, the fiber $\Phi^{-1}(\lvert c\rvert,0,0)$ is $\zeta_{\mathrm{MVZ}}$-superheavy.
If $|c|\neq0$, the fiber $\Phi^{-1}(\lvert c\rvert,0,0)$ is homeomorphic to a 3-torus with a normal crossing along an $S^1$.
For more precise description of this fiber and its singularity, see \cite[Section V.6]{CB}.
\end{example}

\begin{example}[Kovalevskaya top]
The \textit{Kovalevskaya top} \cite{Ko} is a top such that $I_1=I_2=2I_3$ and $f(x,y,z)=ax$ for some real number $a$.
We define another function $\underline{G}\colon T\SO(3)\to\RR$ by
\[
	\underline{G}(q,\omega)=\left(\omega_1^2-\omega_2^2-\frac{2a}{I_1}n_1\right)^2+\left(2\omega_1\omega_2-\frac{2a}{I_1}n_2\right)^2,
\]
and set $G=\underline{G}\circ\Psi^{-1}$.
Then, $\{H,G\}=0$ and $\{L_z,G\}=0$.
We set $\Phi=(H,L_z,G)\colon T^*\SO(3)\to\RR^3$.
By \eqref{eq:top SH}, $H(S_H)=\{\lvert a\rvert\}$.
If $a\neq 0$, then
\[
	S_H=\{\,(q_1,q_2,q_3,0,0,0)\in T^*\SO(3)\mid q_1=\sgn(a)e_3\,\},
\]
where $\sgn(a)$ is the signature of $a$,
and hence $G(S_H)=\{4a^2/I_1^2\}$.
If $a=0$, then $S_H=0_{\SO(3)}$,
and hence $G(S_H)=\{0\}$.

Therefore, given $a\in\RR$,
we have $\Phi(S_H)=\{(\lvert a\rvert,0,4a^2/I_1^2)\}$.
By Theorem \ref{main theorem}, the fiber $\Phi^{-1}(\lvert a\rvert,0,4a^2/I_1^2)$ is $\zeta_{\mathrm{MVZ}}$-superheavy.
\end{example}

\begin{example}[Clebsch top]\label{Clebsch}
The \textit{Clebsch top} \cite{Cl} is a top such that
$I_1<I_2<I_3$ and
\[
	f(x,y,z)=\frac{1}{2I_1I_2I_3}(I_1x^2+I_2y^2+I_3z^2).
\]
This system describes a motion of a rigid body, fixed in its center of gravity, in an ideal fluid.
We define another function $\underline{G}\colon T\SO(3)\to\RR$ by
\[
	\underline{G}(q,\omega)=\frac{1}{2}(I_1^2\omega_1^2+I_2^2\omega_2^2+I_3^2\omega_3^2)-\frac{1}{2I_1I_2I_3}(I_2I_3n_1^2+I_3I_1n_2^2+I_1I_2n_3^2),
\]
and set $G=\underline{G}\circ\Psi^{-1}$.
Then, $\{H,G\}=0$ and $\{L_z,G\}=0$.
We set $\Phi=(H,L_z,G)\colon T^*\SO(3)\to\RR^3$.
Since $I_1<I_2<I_3$, by \eqref{eq:top SH},
\[
	S_H=\{\,(q_1,q_2,q_3,0,0,0)\in T^*\SO(3)\mid n_3=\pm 1\,\}.
\]
Then,
\[
	\Phi(S_H)=\left\{\left(\frac{1}{2I_1I_2},0,-\frac{1}{2I_3}\right)\right\}.
\]
By Theorem \ref{main theorem}, the fiber $\Phi^{-1}\bigl((2I_1I_2)^{-1},0,-(2I_3)^{-1}\bigr)$ is $\zeta_{\mathrm{MVZ}}$-superheavy.
\end{example}

\begin{remark}
We can also apply our main theorem to other famous Liouville integrable systems such as the Euler top \cite{Eu,Ar}.
However, the corresponding $\zeta_{\mathrm{MVZ}}$-superheavy fiber of the Euler top contains the zero-section which is already known to be $\zeta_{\mathrm{MVZ}}$-superheavy.
In this sense, our theorem gives only trivial results for such examples.
\end{remark}

\subsection{On the existence of infinitely many non-displaceable fibers}\label{sec:Rab}

It is a natural question to ask whether a Liouville integrable system has infinitely many non-displaceable fibers.
Along this line, we have the following result.

Let $(N,g)$ be a closed Riemannian manifold.
Given a positive number $r$, let
\[
	S^*_{g,r}N=\{\,(q,p)\in T^*N\mid\,\| p\|_g=r\}
	\quad\text{and}\quad B^*_{g,r}N=\{\,(q,p)\in T^*N\mid\,\| p\|_g<r\}
\]
denote the sphere subbundle of radius $r$ and the open ball subbundle of radius $r$, respectively.

\begin{proposition}\label{rabi mugen}
Let $(N,g)$ be a closed Riemannian manifold.
Assume that for any positive number $r$ there exist a positive number $R$ with $R>r$
and a partial symplectic quasi-state $\zeta_R\colon C_c(T^\ast N)\to\mathbb{R}$ such that $S^\ast_{g,R}N$ is $\zeta_R$-superheavy.
Let $H\colon T^*N\to\RR$ be a Hamiltonian such that $H^{-1}\bigl((-\infty,c]\bigr)$ is compact for any $c\in\mathbb{R}$.
Then, every moment map $\Phi=(\Phi_1,\ldots,\Phi_k)\colon T^*N\to\RR^k$ with $\Phi_1=H$ has infinitely many non-displaceable fibers.
\end{proposition}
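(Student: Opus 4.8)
The plan is to exploit the hypothesis at arbitrarily large radii to produce, for each radius, a non-displaceable fiber of $\Phi$ pinned to a high value of the first component $\Phi_1=H$, and then to conclude that these fibers cannot all coincide. Applying the hypothesis with $r=n$ for each positive integer $n$, I fix a radius $R_n>n$ and a partial symplectic quasi-state $\zeta_n:=\zeta_{R_n}\colon C_c(T^*N)\to\RR$ for which $S^*_{g,R_n}N$ is $\zeta_n$-superheavy; note that $R_n\to\infty$. For each $n$ I will find a $\zeta_n$-heavy fiber of $\Phi$ and show that it must meet $S^*_{g,R_n}N$.

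First I would establish, for each $n$, the existence of a $\zeta_n$-heavy fiber of $\Phi$, following the Entov--Polterovich argument behind Theorem \ref{existence of non-disp fiber} (see \cite{EP06,EP09}). The key point making this work on the open manifold $T^*N$ is that $\Phi$ is proper: since $\Phi_1=H$ has compact sublevel sets, for compact $K\subset\RR^k$ one has $\Phi^{-1}(K)\subset H^{-1}\bigl((-\infty,c]\bigr)$ with $c=\max\{y_1\mid y\in K\}$, which is compact. Hence $f\mapsto f\circ\Phi$ maps $C_c(\RR^k)$ into $C_c(T^*N)$, and $\tau_n(f):=\zeta_n(f\circ\Phi)$ is a well-defined monotone, homogeneous functional. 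Because the components of $\Phi$ Poisson-commute, the functions $f\circ\Phi$ and $h\circ\Phi$ Poisson-commute for all $f,h$, so additivity of $\zeta_n$ on Poisson-commuting functions upgrades $\tau_n$ to a positive linear functional; by the Riesz representation theorem it is integration against a Radon measure $\mu_n$ on $\Phi(T^*N)$, and the fiber over any point $y_n\in\supp\mu_n$ is $\zeta_n$-heavy.

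With the $\zeta_n$-heavy fiber $\Phi^{-1}(y_n)$ in hand, I would invoke Theorem \ref{prop:shv is hv}(iii): a $\zeta_n$-heavy set meets every $\zeta_n$-superheavy set, so there is $x_n\in\Phi^{-1}(y_n)\cap S^*_{g,R_n}N$, and in particular $\|p\|_g=R_n$ at $x_n$. Coercivity of $H$ then forces the energy level up: given $c\in\RR$, compactness of $H^{-1}\bigl((-\infty,c]\bigr)$ and of $N$ yields a radius $\rho_c$ with $H^{-1}\bigl((-\infty,c]\bigr)\subset B^*_{g,\rho_c}N$, so that $H>c$ on $S^*_{g,R}N$ whenever $R>\rho_c$; thus $\inf_{S^*_{g,R}N}H\to\infty$ as $R\to\infty$. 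Since $(y_n)_1=\Phi_1(x_n)=H(x_n)\ge\inf_{S^*_{g,R_n}N}H$ and $R_n\to\infty$, the first coordinates $(y_n)_1$ tend to $\infty$, so the values $y_n$ are infinitely many distinct points of $\Phi(T^*N)$. The corresponding fibers $\Phi^{-1}(y_n)$ are then infinitely many distinct fibers of $\Phi$, each non-displaceable by Theorem \ref{prop:shv is hv}(ii), as required.

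The step I expect to be the main obstacle is the first one: transplanting the Entov--Polterovich heavy-fiber construction to the open manifold $T^*N$ with a merely compactly supported partial quasi-state $\zeta_n$. One must verify that $\tau_n$ is nontrivial so that $\supp\mu_n\neq\emptyset$ (and hence a heavy fiber genuinely exists), and that the heaviness inequality $\zeta_n(F)\ge\inf_{\Phi^{-1}(y_n)}F$ persists for all $F\in C_c(T^*N)$, not only for those of the form $f\circ\Phi$. The properness of $\Phi$ supplied by the compactness of the sublevel sets of $H$ is exactly the ingredient that makes this adaptation go through; granting it, the intersection step and the coercivity estimate are routine.
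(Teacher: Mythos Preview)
Your overall strategy matches the paper's: for a large radius $R$, locate a non-displaceable fiber of $\Phi$ that must meet $S^*_{g,R}N$, and then use coercivity of $H=\Phi_1$ to force such fibers apart. The paper phrases this as a single contradiction (bound the finitely many non-displaceable fibers inside some $B^*_{g,r}N$, choose $R>r$, contradict), whereas you iterate directly over $R_n\to\infty$; that difference is cosmetic, and your intersection and coercivity steps are correct.

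The genuine gap is in how you produce the fiber. You claim that ``additivity of $\zeta_n$ on Poisson-commuting functions'' makes $\tau_n(f)=\zeta_n(f\circ\Phi)$ a positive linear functional, after which Riesz yields a measure and a $\zeta_n$-heavy fiber. But Definition~\ref{def:psqs} gives only \emph{quasi-subadditivity} $\zeta(H_1+H_2)\le\zeta(H_1)+\zeta(H_2)$ together with semi-homogeneity for \emph{positive} scalars; nothing forces $\zeta(-H)=-\zeta(H)$, so the reverse inequality is unavailable and $\tau_n$ need not be linear. The Riesz route, and with it the existence of a $\zeta_n$-heavy fiber, is therefore not justified from the axioms of a partial quasi-state (this is in fact a known subtlety, not the open-manifold issue you flag). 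The paper bypasses it entirely: it appeals to Corollary~\ref{pshv open} (a consequence of the $X$-stem Theorem~\ref{theorem:X-stem is shv}), which for $X=S^*_{g,R}N$ directly produces a fiber non-displaceable from itself and from $S^*_{g,R}N$, hence meeting it. Substituting that appeal for your Riesz paragraph repairs the proof.
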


We prove Proposition \ref{rabi mugen} in Section \ref{Rabi section}.

The authors do not know examples of Riemannian manifolds satisfying the assumption of Proposition \ref{rabi mugen}.
However, the authors expect that every closed Riemannian manifold satisfies the assumption due to the following reason.
Given a Riemannian metric $g$ on $N$ and a positive number $R$, it is known that the Rabinowitz Floer homology of $S^\ast_{g,R}N$ is non-trivial \cite{CFO}.
Thus, one can construct a Rabinowitz spectral invariant (with respect to the fundamental class) from the Rabinowitz Floer homology through Albers--Fauenfelder's construction \cite{AF}.
We expect that the asymptotization $\zeta$ of that spectral invariant is a partial symplectic quasi-state and $S^\ast_{g,R}N$ is $\zeta$-superheavy since $\zeta$ is constructed from the Rabinowitz Floer theory of $S^\ast_{g,R}N$.

By Proposition \ref{rabi mugen} and the above expectation, we pose the following conjecture.

\begin{conjecture}\label{conjecture mugen}
Let $N$ be a closed manifold.
Let $H\colon T^*N\to\RR$ be a Hamiltonian such that $H^{-1}\bigl((-\infty,c]\bigr)$ is compact for any $c\in\mathbb{R}$.
Then, every moment map $\Phi=(\Phi_1,\ldots,\Phi_k)\colon T^*N\to\RR^k$ with $\Phi_1=H$ has infinitely many non-displaceable fibers.
\end{conjecture}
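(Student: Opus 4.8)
The plan is to manufacture, for a suitable sequence of radii $R_n\to\infty$, one non-displaceable fiber of $\Phi$ lying over a value whose first coordinate is at least $m(R_n):=\min_{S^*_{g,R_n}N}H$; since these minima tend to infinity, the resulting fibers will have distinct first coordinates and hence be pairwise distinct. The two ingredients are, first, the existence of a $\zeta_R$-heavy fiber of $\Phi$ for each of the given quasi-states $\zeta_R$, and second, the fact that such a heavy fiber is forced to meet the $\zeta_R$-superheavy sphere bundle $S^*_{g,R}N$ and therefore to reach momentum level $R$.

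First I would establish the following, which I expect to be the main obstacle: for every partial symplectic quasi-state $\zeta\colon C_c(T^*N)\to\RR$ there is a value $y_0\in\Phi(T^*N)$ with $\Phi^{-1}(y_0)$ compact and $\zeta$-heavy. On a closed manifold this is the Entov--Polterovich existence principle (an enhancement of the mechanism behind Theorem \ref{existence of non-disp fiber} producing a heavy, not merely non-displaceable, fiber); the point is to transport it to the open manifold $T^*N$. Properness of $H=\Phi_1$ from below is what makes the transport possible: for $f\in C_c(\RR^k)$ the support of $f$ has first coordinate contained in a compact interval $[a,b]$, so $\Phi^{-1}(\supp f)\subset H^{-1}\bigl((-\infty,b]\bigr)$ is compact and $f\circ\Phi\in C_c(T^*N)$; moreover every fiber $\Phi^{-1}(y)$ is itself compact for the same reason. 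Hence $f\mapsto\zeta(f\circ\Phi)$ is a well-defined monotone, positively homogeneous, subadditive functional on $C_c(\RR^k)$, to which the Entov--Polterovich argument applies. The delicate point in the adaptation is the normalization, since the constant function $1$ is no longer compactly supported; here I would anchor the normalization using a superheavy reference set.

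Granting this, fix $R$ and let $\zeta_R$ and the $\zeta_R$-superheavy bundle $S^*_{g,R}N$ be as in the hypothesis, and choose a $\zeta_R$-heavy fiber $\Phi^{-1}(y_R)$. By Theorem \ref{prop:shv is hv}(iii) a $\zeta_R$-heavy set and a $\zeta_R$-superheavy set must intersect, so there is a point $(q,p)\in\Phi^{-1}(y_R)\cap S^*_{g,R}N$. For this point $\|p\|_g=R$, and writing $(y_R)_1$ for the first coordinate of $y_R$ we obtain
\[
	(y_R)_1=\Phi_1(q,p)=H(q,p)\ge m(R),\qquad m(R):=\min_{S^*_{g,R}N}H .
\]
Since $N$ is closed and $H^{-1}\bigl((-\infty,c]\bigr)$ is compact, each such sublevel set lies in some ball bundle $B^*_{g,\rho}N$, whence $H>c$ everywhere on $S^*_{g,R}N$ as soon as $R\ge\rho$; thus $m(R)\to\infty$ as $R\to\infty$.

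Finally I would select the radii inductively. Suppose non-displaceable fibers $\Phi^{-1}(y_1),\dots,\Phi^{-1}(y_n)$ have been found (with $n=0$ allowed), and set $c_n=\max_{1\le i\le n}(y_i)_1$ (with $c_0=-\infty$). Using $m(R)\to\infty$, pick a positive $\rho_n$ so that $m(R)>c_n$ for all $R\ge\rho_n$; the hypothesis then provides some $R_{n+1}>\rho_n$ together with a quasi-state $\zeta_{R_{n+1}}$, and the previous paragraph yields a $\zeta_{R_{n+1}}$-heavy fiber $\Phi^{-1}(y_{n+1})$ with $(y_{n+1})_1\ge m(R_{n+1})>c_n$. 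This fiber is non-displaceable by Theorem \ref{prop:shv is hv}(ii), and the strict inequality $(y_{n+1})_1>c_n$ shows its value differs from all previous ones, so it is a genuinely new fiber. Iterating produces infinitely many pairwise distinct non-displaceable fibers of $\Phi$, which is the assertion.
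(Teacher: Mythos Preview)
There is a basic mismatch: the statement you were asked to prove is Conjecture~\ref{conjecture mugen}, which the paper explicitly leaves \emph{open}. The paper does not prove it; it proves only the conditional Proposition~\ref{rabi mugen}, which carries the extra hypothesis that for every $r>0$ there exist $R>r$ and a partial symplectic quasi-state $\zeta_R$ on $T^*N$ with $S^*_{g,R}N$ $\zeta_R$-superheavy. Your argument invokes precisely this extra hypothesis (``let $\zeta_R$ and the $\zeta_R$-superheavy bundle $S^*_{g,R}N$ be as in the hypothesis''; ``the hypothesis then provides some $R_{n+1}>\rho_n$ together with a quasi-state $\zeta_{R_{n+1}}$''), but that hypothesis is nowhere in the statement of the conjecture. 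So as a proof of the conjecture there is a genuine gap: you have not produced the quasi-states $\zeta_R$, and the paper itself remarks that no closed Riemannian manifold is currently known to satisfy that assumption.

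Read instead as a proof of Proposition~\ref{rabi mugen}, your approach is close in spirit to the paper's, though organized differently. The paper argues by contradiction: if there were only finitely many non-displaceable fibers they would all sit in some $B^*_{g,r}N$; applying Corollary~\ref{pshv open} with $X=S^*_{g,R}N$ for some $R>r$ then produces a fiber non-displaceable from $S^*_{g,R}N$, hence meeting it, contradicting the containment in $B^*_{g,r}N$. You instead construct the fibers directly, one radius at a time, by first producing a $\zeta_R$-heavy fiber and then using the heavy--superheavy intersection property (Theorem~\ref{prop:shv is hv}(iii)) to force its first coordinate to be at least $m(R)\to\infty$. The paper's route sidesteps your self-declared ``main obstacle'' (existence of a $\zeta$-heavy fiber on an open manifold, which you only sketch) by relying on Corollary~\ref{pshv open}, which needs merely a fiber non-displaceable from the superheavy set rather than a heavy one; your route, if the heavy-fiber step is completed, is more constructive and yields the extra information that the non-displaceable fibers can be chosen with first coordinates tending to $+\infty$.
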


Actually, this conjecture is true when $\Phi$ is the spherical pendulum (Remark \ref{Pol torus}) or a convex Hamiltonian (Remark \ref{non-disp regular CFP}).


\section{Preliminaries}\label{prelim}

In this section, we first set conventions and notation.
Then we define partial symplectic quasi-states.
Let $(M,\omega)$ be a symplectic manifold.

\subsection{Conventions and notation}\label{notation}

Let $H$ be a one-periodic in time Hamiltonian with compact support,
i.e., a smooth function $H\colon [0,1] \times M\to\RR$ with compact support.
We set $H_t=H(t,\cdot)$ for $t\in [0,1]$.
The \textit{Hamiltonian vector field} $X_{H_t}\in \mathfrak{X}(M)$ associated to $H_t$ is defined by
\[
	\iota_{X_{H_t}}\omega=-dH_t.
\]
The \textit{Hamiltonian isotopy} $\{\varphi_H^t\}_{t\in\RR}$ associated to $H$ is defined by
\[
	\begin{cases}
		\varphi_H^0=\mathrm{id},\\
		\frac{d}{dt}\varphi_H^t=X_{H_t}\circ\varphi_H^t\quad \text{for all}\ t\in\RR,
	\end{cases}
\]
and its time-one map $\varphi_H=\varphi_H^1$ is referred to as the \textit{Hamiltonian diffeomorphism with compact support} generated by $H$.
Let $\Ham(M)$ denote the group of Hamiltonian diffeomorphisms of $M$ with compact supports.


\subsection{Partial symplectic quasi-states}\label{psqs}

Let $C_c^\infty(M)$ denote the set of smooth functions on $M$ with compact supports.

\begin{definition}[{\cite{EP06,FOOO11b,PR,KO19b}}]\label{def:psqs}
A \textit{partial symplectic quasi-state} on $(M,\omega)$ is a functional $\zeta\colon C_c(M)\to\RR$ satisfying the following conditions.
\begin{description}
	\item[Normalization] There exists a non-empty compact subset $K_{\zeta}$ of $M$ such that $\zeta(F)=a$ for any real number $a$ and any function $F\in C_c(M)$ with $F|_{K_{\zeta}}\equiv a$.
	\item[Stability] For any $H_1,H_2\in C_c(M)$, we have
	\[
		\min_M(H_1-H_2)\leq\zeta(H_1)-\zeta(H_2)\leq\max_M(H_1-H_2).
	\]
	In particular, \textbf{Monotonicity} holds: $\zeta(H_1)\leq\zeta(H_2)$ if $H_1\leq H_2$.
	\item[Semi-homogeneity] $\zeta(sH)=s\zeta(H)$ for any $H\in C_c(M)$ and any $s>0$.
	\item[Hamiltonian Invariance] $\zeta(H\circ\phi)=\zeta(H)$ for any $H\in C_c(M)$ and any $\phi\in\Ham(M)$.
	\item[Vanishing] $\zeta(H)=0$ for any $H\in C_c(M)$ whose support is displaceable.
	\item[Quasi-subadditivity] $\zeta(H_1+H_2)\leq\zeta(H_1)+\zeta(H_2)$ for any $H_1,H_2\in C_c^\infty(M)$ satisfying $\{H_1,H_2\}=0$.
\end{description}
\end{definition}

\begin{remark}
There are different definitions of partial symplectic quasi-state.
Our definition is based on \cite{KO19b}, but our definition is slightly different from that one.
In \cite{KO19b}, they consider the different normalization condition $\zeta(a)=a$ for every real number $a$.
In this paper, since we consider open symplectic manifolds and functions with compact supports, we cannot define $\zeta(a)$ unless $a=0$.
This is why we take a slightly different normalization condition.
One can easily prove that our definition and the original one are equivalent when $M$ is closed.
\end{remark}

We obtain the following corollary of Theorem \ref{theorem:X-stem is shv} which is an analogue of the main result in \cite{KO19b}.

\begin{corollary}\label{pshv open}
Let $(M,\omega)$ be a symplectic manifold.
Let $\zeta\colon C_c(M)\to\RR$ be a partial symplectic quasi-state on $(M,\omega)$, and $X$ a $\zeta$-superheavy subset of $M$.
Let $H\colon M\to\RR$ be a Hamiltonian such that $H^{-1}\bigl((-\infty,c]\bigr)$ is compact for any $c\in\mathbb{R}$.
Then, every moment map $\Phi=(\Phi_1,\ldots,\Phi_k)\colon M\to\RR^k$ with $\Phi_1=H$ has a fiber that is non-displaceable from itself and from $X$.
\end{corollary}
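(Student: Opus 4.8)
The plan is to argue by contradiction and reduce everything to Theorem \ref{theorem:X-stem is shv}. The guiding observation is that the properness hypothesis on $\Phi_1=H$ is exactly what forces every fiber of $\Phi$ to be compact, and hence eligible to be an $X$-stem: for $y=(y_1,\dots,y_k)$ one has $\Phi^{-1}(y)\subseteq\Phi_1^{-1}(y_1)=H^{-1}(y_1)\subseteq H^{-1}\bigl((-\infty,y_1]\bigr)$, and the last set is compact by hypothesis, so each fiber is a closed subset of a compact set and is therefore compact. Since $X$, being $\zeta$-superheavy, is itself compact by Definition \ref{def:hv}, both the notion of $X$-stem and that of $\zeta$-superheaviness will apply to any fiber we select.

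With this in hand, I would first record the compactness of all fibers as above, and then suppose, for contradiction, that every fiber of $\Phi$ is displaceable from itself or from $X$. Fixing any $p\in\Phi(M)$ and setting $Y=\Phi^{-1}(p)$, the contradiction hypothesis says in particular that every fiber of $\Phi$ other than $Y$ is displaceable from itself or from $X$; thus the given moment map $\Phi$ itself witnesses conditions (i) and (ii) of Definition \ref{def:X-stem}, and $Y$ is an $X$-stem. Theorem \ref{theorem:X-stem is shv}, applied with the $\zeta$-superheavy set $X$, then makes $Y$ a $\zeta$-superheavy subset, hence $\zeta$-heavy by Theorem \ref{prop:shv is hv}(i). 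Theorem \ref{prop:shv is hv}(ii) renders $Y$ non-displaceable from itself, while Theorem \ref{prop:shv is hv}(iii), together with the $\zeta$-superheaviness of $X$, renders $Y$ non-displaceable from $X$. This contradicts the assumption that $Y$ is displaceable from itself or from $X$, so some fiber of $\Phi$ must be non-displaceable both from itself and from $X$.

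Because the argument is so short, I expect the only real work to be bookkeeping, and that is where I anticipate the main obstacle. The crucial point to get right is that properness of $H$ is genuinely what supplies compactness of the fibers: without it a candidate fiber could fail to be a compact subset and the whole $X$-stem and superheaviness machinery would be inapplicable. I would also double-check the two definitional matters on which the argument silently relies: that the given $\Phi$ qualifies as the witnessing moment map in Definition \ref{def:X-stem} (its components Poisson-commute, as required of a moment map, this being part of the hypothesis), and that the negation of ``displaceable from itself or from $X$'' is precisely ``non-displaceable from itself \emph{and} non-displaceable from $X$,'' which is exactly the pair of conclusions furnished by Theorem \ref{prop:shv is hv}(ii)--(iii).
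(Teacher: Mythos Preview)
Your proof is correct and follows essentially the same approach as the paper: argue by contradiction, use properness of $H$ to get compact fibers, observe that each fiber is then an $X$-stem, and invoke Theorem~\ref{theorem:X-stem is shv} together with Theorem~\ref{prop:shv is hv}. The only cosmetic difference is that the paper notes \emph{all} fibers become $\zeta$-superheavy and derives the contradiction from two disjoint superheavy fibers via Theorem~\ref{prop:shv is hv}(i),(iii), whereas you fix a single fiber $Y$ and contradict the hypothesis on $Y$ directly; your route is arguably a touch cleaner since it avoids the (harmless but unstated) implicit use of at least two distinct fibers.
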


\begin{proof}
Arguing by contradiction, assume that every fiber of $\Phi$ is displaceable from itself or from $X$.
By the assumption on $H$, every fiber of $\Phi$ is compact.
Then, every fiber is an $X$-stem.
Since $X$ is $\zeta$-superheavy, by Theorem \ref{theorem:X-stem is shv}, every fiber is $\zeta$-superheavy.
Since all fibers are mutually disjoint, it contradicts Theorem \ref{prop:shv is hv} (i) and (iii).
\end{proof}


\section{Proofs of the main results}\label{prooflemma}

In this section, we prove the main results stated in Section \ref{sec:main results}.
Let $N$ be a closed manifold.
Let $\pi\colon T^*N\to N$ denote the natural projection.
We equip $T^*N$ with the standard symplectic form $\omega_0$.

\subsection{Proof of Theorem \ref{main theorem2}}\label{proofmainthm}

For the sake of applications in Sections \ref{sec:Rab} and \ref{Rabi section}, we generalize condition $(\star)$ as follows.

\begin{definition}\label{star_s}
Let $\Sigma$ be a compact subset of $T^*N$.
A (time-independent) Hamiltonian $H\colon T^*N\to\RR$ satisfies \textit{condition $(\star)_{\Sigma}$} if the following conditions hold.
\begin{enumerate}
	\item For any $c\in\RR$ the sublevel set $H^{-1}\bigl((-\infty,c]\bigr)\subset T^*N$ is compact.
	\item For any $q\in N$, \[H|_{T^*_qN\cap\Sigma}\equiv\min_{p\in T_q^*N}H(q,p).\]
\end{enumerate}
\end{definition}

We note that condition $(\star)_{\Sigma}$ is equivalent to condition $(\star)$ when $\Sigma=0_N$.
For a Hamiltonian $H\colon T^*N\to\RR$ satisfying condition $(\star)_{\Sigma}$, we set
\[
	m_H=\max_{q\in N}\min_{p\in T_q^*N}H(q,p)\quad\text{and}\quad%
	S_H^{\Sigma}=H^{-1}(m_H)\cap\Sigma.
\]
Then, $S_H^{0_N}=S_H$ (see \eqref{eq:m_H}).

In this section, we prove the following theorem which generalizes Theorem \ref{main theorem2}.

\begin{theorem}\label{thm:gen}
Let $N$ be a closed manifold, $\Sigma$ a compact subset of $T^*N$,
and $\Phi=(\Phi_1,\ldots,\Phi_k)\colon T^*N\to\RR^k$ a moment map.
Assume that $\Phi_1$ satisfies condition $(\star)_{\Sigma}$ and that the set $\Phi(S_{\Phi_1}^{\Sigma})$ is a singleton, i.e.,
$\Phi(S_{\Phi_1}^{\Sigma})=\{y_0\}$ for some $y_0\in\RR^k$.
Then, every fiber of $\Phi$, other than $\Phi^{-1}(y_0)$, is displaceable from $\Sigma$.
In particular, the fiber $\Phi^{-1}(y_0)$ is a $\Sigma$-stem.
Hence, by Theorem \ref{theorem:X-stem is shv}, $\Phi^{-1}(y_0)$ is $\zeta$-superheavy
for any partial symplectic quasi-state $\zeta$ on $(T^*N,\omega_0)$ such that $\Sigma$ is $\zeta$-superheavy.
\end{theorem}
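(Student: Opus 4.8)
The plan is to reduce the whole statement to one displaceability claim and then settle that claim by a fiberwise-translation argument. Granting that every fiber $\Phi^{-1}(y)$ with $y\neq y_0$ is displaceable from $\Sigma$, the fiber $\Phi^{-1}(y_0)$ satisfies conditions (i) and (ii) of Definition \ref{def:X-stem} with $X=\Sigma$, so it is a $\Sigma$-stem, and its $\zeta$-superheaviness is then immediate from Theorem \ref{theorem:X-stem is shv}. Thus the entire content is the displaceability of the other fibers. I first record that each fiber is compact: condition $(\star)_{\Sigma}$(i) makes $\Phi_1$ proper, so $\Phi^{-1}(y)\subset\Phi_1^{-1}(y_1)$ is a closed subset of a compact sublevel set. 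Writing $y=(c,y_2,\dots,y_k)$ and noting that $\Phi_1\equiv m_{\Phi_1}$ on $S_{\Phi_1}^{\Sigma}$ forces $y_0=(m_{\Phi_1},\dots)$, I split the argument according to whether $c\geq m_{\Phi_1}$ or $c<m_{\Phi_1}$.

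The case $c\geq m_{\Phi_1}$ is essentially free once one observes that a compact set disjoint from the closed set $\Sigma$ is displaceable from $\Sigma$ by the identity (this is exactly what lies behind Theorem \ref{prop:shv is hv}(iii)). So here I would simply show $\Phi^{-1}(y)\cap\Sigma=\emptyset$. Indeed, if $x\in\Phi^{-1}(y)\cap\Sigma$, then condition $(\star)_{\Sigma}$(ii) gives $\Phi_1(x)=\min_{p}\Phi_1(\pi(x),p)\leq m_{\Phi_1}$, while $x\in\Phi^{-1}(y)$ gives $\Phi_1(x)=c\geq m_{\Phi_1}$; hence $c=m_{\Phi_1}$ and $x\in\Phi_1^{-1}(m_{\Phi_1})\cap\Sigma=S_{\Phi_1}^{\Sigma}$, so $\Phi(x)=y_0\neq y$, a contradiction.

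The only substantive case is $c<m_{\Phi_1}$, where $\Phi^{-1}(y)$ may genuinely meet $\Sigma$. The key geometric input is that such a fiber does not project onto all of $N$: choosing $q^*$ with $\min_{p}\Phi_1(q^*,p)=m_{\Phi_1}>c$, every covector over $q^*$ has $\Phi_1>c$, so $q^*\notin\pi(\Phi^{-1}(y))$ and $W:=N\setminus\pi(\Phi^{-1}(y))$ is a nonempty open set (the projection of a compact set being compact). I would then displace $\Sigma$ off the fiber by fiberwise translation. Pick a smooth $h\colon N\to\RR$ all of whose critical points lie in $W$ (possible on connected $N$ by dragging the critical points of any Morse function into the nonempty open set $W$, with the disconnected case treated componentwise), so that $\|dh_q\|\geq\mu>0$ on the compact set $\pi(\Phi^{-1}(y))$. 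The time-one map of the Hamiltonian $\lambda\,(h\circ\pi)$ is the translation $(q,p)\mapsto(q,p+\lambda\,dh_q)$; for $\lambda$ large it sends each point of the bounded set $\Sigma$ lying over $\pi(\Phi^{-1}(y))$ to a covector of norm exceeding $\max\{\|p\|:(q,p)\in\Phi^{-1}(y)\}$, while points of $\Sigma$ over $W$ are irrelevant since $\Phi^{-1}(y)$ has no points there. Hence the translate of $\Sigma$ is disjoint from $\Phi^{-1}(y)$, which is the desired displacement.

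The hard part is not the idea of this last step but its bookkeeping: $\lambda\,(h\circ\pi)$ is not compactly supported, so I must replace it by a compactly supported Hamiltonian having the same time-one effect on $\Sigma$. This is routine — cut off $\lambda\,(h\circ\pi)$ outside a large ball subbundle containing $\Sigma$ together with the compact region it sweeps out under the flow, and use uniqueness for the Hamiltonian ODE to see the trajectories emanating from $\Sigma$ are unchanged — but it is where care is needed, and one must also confirm the norm estimate holds uniformly in $q$ over the compact base. Combining the three cases shows that every fiber other than $\Phi^{-1}(y_0)$ is displaceable from $\Sigma$, which is precisely condition (ii) in the definition of a $\Sigma$-stem, and the proof then concludes by invoking Theorem \ref{theorem:X-stem is shv}.
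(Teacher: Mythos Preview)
Your argument is correct and its engine---displacing $\Sigma$ off the low-energy fiber via the fiberwise translation $(q,p)\mapsto(q,p+\lambda\,dh_q)$ for a function $h$ whose critical points are confined to a well-chosen open set---is exactly the paper's (packaged there as Lemma~\ref{maintheorem}, with the compact-support bookkeeping you outline handled by Lemma~\ref{rel graph}). The one noteworthy difference is the choice of that open set and the resulting strength of the intermediate claim. The paper puts the critical points in an open set $U$ where the fiberwise minimum of $\Phi_1$ exceeds $c$, and proves that the \emph{entire level set} $\Phi_1^{-1}(c)$ is displaceable from $\Sigma$; handling points over $U$ then requires condition~$(\star)_\Sigma$(ii) to argue that translation cannot push $\Phi_1$ below $c_q>c$. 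You instead take $W=N\setminus\pi(\Phi^{-1}(y))$ and displace only the single fiber $\Phi^{-1}(y)$, which lets you skip that step entirely since $\Phi^{-1}(y)$ has no points over $W$. Your route is thus marginally shorter for this theorem, while the paper's stronger level-set lemma is reusable---it is invoked again in the proof of Theorem~\ref{secondcorollary}.
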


Therefore, applying Theorem \ref{thm:gen} for $\Sigma=0_N$ 
yields Theorem \ref{main theorem2}.
To prove Theorem \ref{thm:gen}, we require the following lemma.

\begin{lemma}\label{maintheorem}
Let $\Sigma$ be a compact subset of $T^*N$
and $H\colon T^*N\to\RR$ a Hamiltonian satisfying condition $(\star)_{\Sigma}$.
Then, for any $c\in\RR$ with $c<m_H$,
the level set $H^{-1}(c)\subset T^*N$ is displaceable from $\Sigma$.
\end{lemma}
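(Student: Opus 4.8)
The plan is to displace $H^{-1}(c)$ off $\Sigma$ by a single large fibrewise translation, exploiting that the strict inequality $c<m_H$ forces a nonempty region of the base over which the entire fibre lies above the level $c$. First I would record the relevant geometry. Write $h(q)=\min_{p\in T_q^*N}H(q,p)$; by condition $(\star)_\Sigma$(i) each $H(q,\cdot)$ is proper and bounded below, so the minimum is attained and $h$ is continuous, with $m_H=\max_N h$. Set $W=\{q\in N:h(q)>c\}$ and $Z=\{q\in N:h(q)\le c\}$. Since $c<m_H=\max_N h$, the open set $W$ is nonempty and $Z$ is compact. Three elementary observations are then available: (a) over $W$ the whole fibre lies above level $c$, so $\pi\bigl(H^{-1}(c)\bigr)\subseteq Z$; (b) $H^{-1}(c)$ is a closed subset of the compact sublevel set $H^{-1}\bigl((-\infty,c]\bigr)$, hence compact, and by that same compactness there is $R_0$ with $\|p\|_g\le R_0$ for every $(q,p)\in H^{-1}(c)$, where $g$ is an auxiliary metric on $N$; (c) $\Sigma$ is compact, so $\|p\|_g\le P$ on $\Sigma$ for some $P$.

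Next I would build the displacing Hamiltonian. Recall that for a smooth $f\colon N\to\RR$ the flow of $\pi^*f=f\circ\pi$ is the fibrewise translation $(q,p)\mapsto(q,p+t\,df_q)$. I want $f$ with $df\neq 0$ on the compact set $Z$, i.e.\ with all of its critical points lying in $W$. Assuming $N$ connected (the general case is handled componentwise; on components with $\max h<c$ no displacement is needed, since $\Sigma$ already lies at levels different from $c$ there), such an $f$ exists: take any Morse function, whose critical set is finite, and precompose with a diffeomorphism isotopic to the identity carrying this finite set into the nonempty open set $W$. The resulting $f$ has all critical points in $W$, so $\delta:=\min_{q\in Z}\|df_q\|_g>0$.

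Finally I would fix the scale and conclude. Choose $\lambda>0$ with $\lambda\delta-R_0>P$, and let $\varphi$ be the time-one map of $\pi^*(\lambda f)$. For $(q,p)\in H^{-1}(c)$ observation (a) gives $q\in Z$ and (b) gives $\|p\|_g\le R_0$, so $\|p+\lambda\,df_q\|_g\ge\lambda\delta-R_0>P$. Since $\varphi(q,p)=(q,p+\lambda\,df_q)$ fixes the base point $q$ and every point of $\Sigma$ over $q$ has fibre-norm at most $P$, the image $\varphi(q,p)$ avoids $\Sigma$; hence $\varphi\bigl(H^{-1}(c)\bigr)\cap\Sigma=\emptyset$. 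Because $\varphi$ moves the compact set $H^{-1}(c)$ within a compact region (the base is fixed and the fibre shift is bounded), a standard cutoff replaces $\pi^*(\lambda f)$ by a compactly supported Hamiltonian with the same effect on $H^{-1}(c)$; as $\Sigma$ is closed, this exhibits $H^{-1}(c)$ as displaceable from $\Sigma$.

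I expect the main obstacle to be the construction in the second paragraph, namely producing a function whose critical locus avoids the entire region $Z$ that carries $H^{-1}(c)$ (equivalently $df\neq 0$ on $Z$); this is precisely where connectedness of $N$ and the nonemptiness of $W$, i.e.\ the hypothesis $c<m_H$, are used. The uniform fibre bound $R_0$ coming from properness (condition $(\star)_\Sigma$(i)) and the compact-support cutoff are the remaining technical points; note that the fibre-minimum condition $(\star)_\Sigma$(ii) is not itself needed for this lemma, only the properness part of $(\star)_\Sigma$ together with $c<m_H$.
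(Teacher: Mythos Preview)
Your proof is correct and uses the same mechanism as the paper: a large fibrewise translation $\Gamma_{\lambda f}$ with $f$ chosen so that all its critical points lie in the open region $W=\{q:h(q)>c\}$, followed by a compact-support cutoff. The one difference is that you translate $H^{-1}(c)$ while the paper translates $\Sigma$. Your choice is slightly cleaner: since $\pi\bigl(H^{-1}(c)\bigr)\subset Z=N\setminus W$ entirely, a single norm estimate over $Z$ disposes of the whole set, and your observation that condition $(\star)_\Sigma$(ii) is not actually needed is correct. The paper, moving $\Sigma$ instead, must treat the two regions separately and invokes (ii) over its set $U\subset W$ to argue that points of $\Sigma$, sitting at the fibrewise minimum there, remain above level $c$ after any translation. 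One small inconsistency in your write-up: your parenthetical treatment of disconnected $N$ (``$\Sigma$ already lies at levels different from $c$'') does rely on (ii), and the borderline case of a component with $\max h=c$ is not addressed; the paper carries the same tacit connectedness assumption, so this is not a defect relative to the original.
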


Before proving Lemma \ref{maintheorem}, we show the following well-known fact.

\begin{lemma}\label{rel graph}
Let $X$ be a compact subset of $T^*N$ and $f\colon N\to\RR$ a smooth function on $N$.
Then the set
\[
	\Gamma_f(X)=\{\,(q,p+df_q)\in T^*N\mid (q,p)\in X\,\}
\]
is Hamiltonian isotopic to $X$.
\end{lemma}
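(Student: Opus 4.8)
The plan is to realize the fiberwise translation by $df$ as the time-one map of a compactly supported Hamiltonian isotopy. For $t\in[0,1]$ define the fiberwise translation
\[
	\psi_t\colon T^*N\to T^*N, \qquad \psi_t(q,p)=(q,\,p+t\,df_q),
\]
so that $\psi_0=\mathrm{id}$ and $\psi_1(X)=\Gamma_f(X)$. The first step is to check, by a local coordinate computation, that $\{\psi_t\}_{t\in[0,1]}$ is the Hamiltonian flow generated by the autonomous function $G=f\circ\pi$ (up to the sign fixed by the convention $\iota_{X_G}\omega_0=-dG$). Indeed, in canonical coordinates $dG$ has no $dp$-component, so $X_G$ is vertical and the flow only translates the fiber coordinate by $t\,df_q$ while fixing $q$; in particular each $\psi_t$ is a symplectomorphism.

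The one point that needs care is that $G=f\circ\pi$ is \emph{not} compactly supported: it is constant along the fibers of $\pi$, hence nonzero on the whole noncompact preimage $\pi^{-1}(\supp f)$. Since the notion of Hamiltonian diffeomorphism used throughout (Section \ref{notation}) requires compact support, I cannot take $\psi_1$ itself as the desired isotopy and must instead produce a compactly supported Hamiltonian whose flow agrees with $\psi_t$ on $X$.

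To this end I use that $X$ is compact. The trajectory set
\[
	K=\bigcup_{t\in[0,1]}\psi_t(X)=\{\,(q,\,p+t\,df_q)\mid (q,p)\in X,\ t\in[0,1]\,\}
\]
is the image of the compact set $X\times[0,1]$ under a continuous map, hence compact. Choose a cutoff function $\chi\colon T^*N\to[0,1]$ with compact support such that $\chi\equiv 1$ on an open neighborhood $U$ of $K$. Then $\chi G\in C_c^\infty(T^*N)$ generates a genuine compactly supported Hamiltonian isotopy $\{\varphi^t_{\chi G}\}$. On $U$ we have $\chi\equiv 1$ and $d\chi=0$, so $X_{\chi G}=X_G$ there. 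For each $x\in X$ the $G$-trajectory $t\mapsto\psi_t(x)$ stays inside $K\subset U$ for all $t\in[0,1]$; by uniqueness of integral curves the $\chi G$-trajectory starting at $x$ coincides with $\psi_t(x)$ and in particular never leaves $U$ on $[0,1]$. Hence $\varphi^1_{\chi G}(x)=\psi_1(x)$ for every $x\in X$, that is,
\[
	\varphi^1_{\chi G}(X)=\psi_1(X)=\Gamma_f(X),
\]
with $\varphi^1_{\chi G}\in\Ham(T^*N)$. This exhibits $\Gamma_f(X)$ as the image of $X$ under a compactly supported Hamiltonian diffeomorphism, proving the lemma.

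The main (and essentially only) obstacle is the non-compactness of the naive generator $f\circ\pi$; the compactness of $X$ is exactly what lets the cutoff go through, and the bootstrapping via uniqueness of integral curves is the technical point to state carefully.
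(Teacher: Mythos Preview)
Your proof is correct and follows essentially the same approach as the paper: both cut off the non-compactly-supported Hamiltonian $f\circ\pi$ by a bump function identically $1$ on a neighborhood of the swept set $\bigcup_{t\in[0,1]}\Gamma_{tf}(X)$, and then observe that the resulting compactly supported flow coincides with the fiberwise translation on $X$. Your write-up is in fact slightly more explicit than the paper's about the compactness of the swept set and the uniqueness-of-integral-curves step, but the argument is the same.
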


\begin{proof}
Let $f\colon N\to\RR$ be a smooth function.
Let $U\subset T^*N$ be an open neighborhood of $\bigcup_{t\in [0,1]}\Gamma_{tf}(X)$.
Choose a smooth function $\rho\colon T^*N\to\RR$ with compact support such that $\rho|_{U}\equiv 1$.
Then, the (time-independent) Hamiltonian $\rho\cdot(f\circ\pi)\colon T^*N\to\RR$
has a compact support and gives a desired Hamiltonian isotopy between $X$ and $\Gamma_f(X)$.
Indeed, for any $(q,p)\in X$ and any $t\in [0,1]$,
\[
	\varphi_{\rho\cdot(f\circ\pi)}^t(q,p)=(q,p+t\cdot df_q)
\]
and hence $\varphi_{\rho\cdot(f\circ\pi)}(X)=\Gamma_f(X)$.
This finishes the proof of Lemma \ref{rel graph}.
\end{proof}

To prove Lemma \ref{maintheorem}, we use a generalized version of Contreras' argument {\cite[Proposition 8.2]{Co06}.

\begin{proof}[Proof of Lemma \ref{maintheorem}]
Let $g$ be a Riemannian metric on $N$.
By condition $(\star)_{\Sigma}$, for each $q\in N$ the restricted funtion $H|_{T^*_qN\cap\Sigma}$ is constant
and let $c_q$ denote that constant.
Then, $m_H=\max_{q\in N}c_q$.

Choose $c\in\RR$ such that $c< m_H$.
By Lemma \ref{rel graph}, it is sufficient to prove that there exists a function $f^\prime\colon N\to\RR$ such that
$\Gamma_{f^\prime}(\Sigma)\cap H^{-1}(c)=\emptyset$.

Take a non-empty open subset $U$ of $N$ so that $\{c_q\}_{q\in U}\subset (c,m_H]$.
Choose a smooth function $f \colon N\to \RR$ whose critical points are contained in $U$.
Since $N\setminus U$ is compact and $df_q\neq0$ for any $q\in N\setminus U$, the number $R_1=\min_{q\in N\setminus U}\| df_q\|_g$ is positive.
We set $\Sigma|_{N\setminus U}=\Sigma\cap T^*N|_{N\setminus U}$ where $T^*N|_{N\setminus U}\subset T^*N$ is the subbundle restricted to $N\setminus U$.
By condition $(\star)_{\Sigma}$, the sets $\Sigma|_{N\setminus U}$ and $H^{-1}\bigl((-\infty,c]\bigr)$ are compact.
Hence there exists a positive number $R_2$ such that
\[
	\Sigma|_{N\setminus U}\cup H^{-1}\bigl((-\infty,c]\bigr)\subset\left.B^*_{g,R_2}N\right|_{N\setminus U}.
\]
We set $R_3=2R_2/R_1$.
Now we claim that
\begin{equation}\label{eq:Gamma N-U}
	\Gamma_{R_3f}(\Sigma|_{N\setminus U})\cap H^{-1}\bigl((-\infty,c]\bigr)=\emptyset.
\end{equation}
By the choice of $R_2$, it is enough to show that
\begin{equation}\label{eq:Gamma ball}
	\Gamma_{R_3f}\left(\left.B^*_{g,R_2}N\right|_{N\setminus U}\right)\cap\left.B^*_{g,R_2}N\right|_{N\setminus U}=\emptyset.
\end{equation}
Arguing by contradiction, assume that there exists a point $(q_0,p_0)$ in the left hand side of \eqref{eq:Gamma ball}.
Recall that
\[
	\Gamma_{R_3f}\left(\left.B^*_{g,R_2}N\right|_{N\setminus U}\right)=\left\{\,(q,p+R_3\cdot df_q)\in T^*N\relmiddle| (q,p)\in \left.B^*_{g,R_2}N\right|_{N\setminus U}\,\right\}.
\]
Since $(q_0,p_0)\in\Gamma_{R_3f}\left(\left.B^*_{g,R_2}N\right|_{N\setminus U}\right)$, we have
$\| R_3\cdot df_{q_0}-p_0\|_g<R_2$.
Since $(q_0,p_0)\in\left.B^*_{g,R_2}N\right|_{N\setminus U}$, we have
$\|p_0\|_g<R_2$.
Thus, by the triangle inequality,
\[
	\| R_3\cdot df_{q_0}\|_g\leq\| R_3\cdot df_{q_0}-p_0\|_g+\| p_0\|_g<R_2+R_2=2R_2.
\]
Therefore, by the choice of $R_1$ and the definition of $R_3$, we have
\[
	R_1\leq\| df_{q_0}\|_g<\frac{2R_2}{R_3}=R_1,
\]
and we obtain a contradiction.
Therefore, \eqref{eq:Gamma N-U} holds.

Let $q\in U$.
By condition $(\star)_{\Sigma}$ and $\{c_q\}_{q\in U}\subset (c,m_H]$,
for any $p\in T^*_qN\cap\Sigma$ we have
\[
	H\bigl(q,p+R_3\cdot df_q\bigr)\geq H(q,p)=c_q>c.
\]
Namely, $\Gamma_{R_3f}(\Sigma|_U)\cap H^{-1}\bigl((-\infty,c]\bigr)=\emptyset$.

Combining with \eqref{eq:Gamma N-U}, we conclude that $\Gamma_{R_3f}(\Sigma)\cap H^{-1}\bigl((-\infty,c]\bigr)=\emptyset$.
In particular, $H^{-1}(c)$ is displaceable from $\Gamma_{R_3f}(\Sigma)$.
By Lemma \ref{rel graph}, $H^{-1}(c)$ is displaceable from $\Sigma$.
This completes the proof of Lemma \ref{maintheorem}.
\end{proof}

\begin{remark}\label{KS remark}
When the authors first found and proved Lemma \ref{maintheorem}, they did not know Contreras' argument.
Seongchan Kim pointed out that Contreras had already used a similar technique.
They would like to thank his pointing out.
\end{remark}

Now we are in a position to prove Theorem \ref{thm:gen}.

\begin{proof}[Proof of Theorem \ref{thm:gen}]
Let $y=(y^1,\ldots,y^k)\in\RR^k$.
If $y\in\RR^k\setminus\Phi(\Sigma)$, then $\Phi^{-1}(y)\cap\Sigma=\emptyset$.
In particular, the fiber $\Phi^{-1}(y)$ is displaceable from $\Sigma$.

Assume that $y\in\Phi(\Sigma)$.
Then, in particular, $y^1\in\Phi_1(\Sigma)$.
Since $\Phi_1$ satisfies condition $(\star)_{\Sigma}$,
for each $q\in N$, the function $\Phi_1|_{T^*_qN\cap\Sigma}$ is constant.
Since $y^1\in\Phi_1(\Sigma)$,
\begin{equation}\label{eq:y_1}
	y^1\leq \max_{\Sigma}\Phi_1=\max_{q\in N}\Phi_1|_{T_q^*N\cap\Sigma}=m_{\Phi_1}.
\end{equation}
If $y^1\neq m_{\Phi_1}$, then \eqref{eq:y_1} and Lemma \ref{maintheorem} imply that
$\Phi_1^{-1}(y^1)$ is displaceable from $\Sigma$ and hence so is $\Phi^{-1}(y)\subset\Phi_1^{-1}(y^1)$.

If $y^1=m_{\Phi_1}$, then
\[
	\Phi^{-1}(y)\cap\Sigma\subset\Phi_1^{-1}(m_{\Phi_1})\cap\Sigma=S_{\Phi_1}^{\Sigma}.
\]
Since we have assumed $y\in\Phi(\Sigma)$ and $\Phi(S_{\Phi_1}^{\Sigma})=\{y_0\}$,
\[
	\{y\}=\Phi\bigl(\Phi^{-1}(y)\cap\Sigma\bigr)\subset\Phi(S_{\Phi_1}^{\Sigma})=\{y_0\}.
\]
Hence $y=y_0$.

Therefore, the above argument implies that every fiber of $\Phi$, other than $\Phi^{-1}(y_0)$, is displaceable from $\Sigma$.
By condition $(\star)_{\Sigma}$, the sublevel set $\Phi_1^{-1}\bigl((-\infty, m_{\Phi_1}]\bigr)$ is compact and hence so is the fiber $\Phi^{-1}(y_0)\subset\Phi_1^{-1}\bigl((-\infty, m_{\Phi_1}]\bigr)$.
Therefore, $\Phi^{-1}(y_0)$ is a $\Sigma$-stem.
This finishes the proof of Theorem \ref{thm:gen}.
\end{proof}


\subsection{Proof of Theorem \ref{secondcorollary}}\label{proofcor}

\begin{proof}
Take $y=(y^1,\ldots,y^k)\in\Phi(T^*N)\subset\RR^k$, where $\Phi=(H_1,\ldots,H_k)\colon T^*N \to \RR^k.$
If $y^i>m_{H_i}$ for some $i\in\{1,\ldots,k\}$,
then $H_i^{-1}(y^i)$ is disjoint from the zero-section $0_N$ and hence so is $\Phi^{-1}(y)\subset H_i^{-1}(y^i)$.
If $y^i<m_{H_i}$ for some $i\in\{1,\ldots,k\}$,
then applying Lemma \ref{maintheorem} for $\Sigma=0_N$,
$H_i^{-1}(y^i)$ is displaceable from $0_N$ and hence so is $\Phi^{-1}(y)\subset H_i^{-1}(y^i)$.

The above argument then implies that every fiber of $\Phi$, other than $\Phi^{-1}(m_{\Phi})$, is displaceable from $0_N$, 
where $m_{\Phi}=(m_{H_1},\ldots,m_{H_k})\in\RR^k$.
By Corollary \ref{pshv open}, $\Phi^{-1}(m_{\Phi})$ is non-displaceable from $0_N$.
Thus,
\[
	\bigcap_{i=1}^k S_{H_i}=\bigcap_{i=1}^k H_i^{-1}(m_{H_i})\cap 0_N=\Phi^{-1}(m_{\Phi})\cap 0_N\neq\emptyset.
\]
This completes the proof of Theorem \ref{secondcorollary}.
\end{proof}


\subsection{Proof of Proposition \ref{maintopo}}\label{proofprop}

Proposition \ref{maintopo} immediately follows from Theorem \ref{prop:shv is hv} (iii), Proposition \ref{zero-sec} and the following assertion.

\begin{proposition}\label{non-surj}
Let $X$ be a compact subset of $T^*N$.
If $\pi(X)\neq N$,
then $X$ is displaceable from the zero-section $0_N$.
\end{proposition}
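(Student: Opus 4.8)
The plan is to displace $X$ off $0_N$ by a suitable fiberwise translation, using Lemma \ref{rel graph}. Since $X$ is compact and $\pi$ is continuous, $\pi(X)$ is a compact, hence closed, subset of $N$; by hypothesis it is proper, so $U:=N\setminus\pi(X)$ is a non-empty open set. Fix a Riemannian metric $g$ on $N$. By Lemma \ref{rel graph}, for every smooth $f\colon N\to\RR$ the set $\Gamma_{-f}(X)=\{\,(q,p-df_q)\mid(q,p)\in X\,\}$ is Hamiltonian isotopic to $X$; moreover $\Gamma_{-f}(X)\cap 0_N=\emptyset$ exactly when there is no $(q,p)\in X$ with $p=df_q$, i.e.\ when the graph of $df$, viewed as a section of $T^*N$, misses the compact set $X$. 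Thus it suffices to produce a smooth $f$ whose differential avoids $X$.

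To build such an $f$, I would borrow the gradient-inflation idea from the proof of Lemma \ref{maintheorem}. First choose a smooth function $f_0\colon N\to\RR$ all of whose critical points lie in the open set $U$. Then $df_0$ is nowhere vanishing on the compact set $\pi(X)=N\setminus U$, so $R_1:=\min_{q\in\pi(X)}\|(df_0)_q\|_g>0$. Since $X$ is compact, it is contained in a ball subbundle, say $X\subset B^*_{g,R}N$ for some $R>0$. Now set $f:=cf_0$ with any constant $c>R/R_1$.

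It remains to check that the graph of $df=c\,df_0$ is disjoint from $X$. For $q\in U=N\setminus\pi(X)$ the whole fiber $T^*_qN$ is disjoint from $X$, so $(q,(df)_q)\notin X$ trivially. For $q\in\pi(X)$ we have
\[
	\|(df)_q\|_g=c\,\|(df_0)_q\|_g\geq cR_1>R,
\]
whereas every point of $X$ lying over $q$ has fiber-norm less than $R$; hence $(q,(df)_q)\notin X$ as well. Therefore the graph of $df$ misses $X$, so $\Gamma_{-f}(X)\cap 0_N=\emptyset$, and Lemma \ref{rel graph} shows that $X$ is Hamiltonian isotopic to a set disjoint from $0_N$, i.e.\ $X$ is displaceable from $0_N$.

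The only delicate point is the existence of a function $f_0$ with all critical points inside $U$; this is precisely the input already used in the proof of Lemma \ref{maintheorem}, and it is the step on which the argument really hinges—once $df_0\neq0$ on $\pi(X)$ is secured, the scaling together with the containment $X\subset B^*_{g,R}N$ makes the disjointness automatic. In the applications $N$ is connected, so the non-emptiness of $U$ immediately yields such an $f_0$; more generally one needs $U$ to meet every connected component of $N$.
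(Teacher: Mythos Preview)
Your argument is correct and essentially identical to the paper's: pick a smooth function whose critical points all lie in $N\setminus\pi(X)$, then scale until the graph of its differential clears the compact set $X$; the paper simply invokes compactness for the existence of a sufficiently large scaling factor where you quantify it via a Riemannian metric. Your caveat about connectedness is well taken and applies equally to the paper's proof.
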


\begin{proof}
By Lemma \ref{rel graph}, it is enough to show that $\Gamma_f(X)$ is displaceable from $0_N$ for some smooth function $f\colon N\to\RR$.
Let $f\colon N\to\RR$ be a smooth function whose critical points are all contained in $N\setminus\pi(X)$.
Then $df_q\neq 0$ for any $(q,p)\in X$.
Since $X$ is compact,
there exists a positive number $R_0>0$ such that for any $(q,p)\in X$, $R_0\cdot df_q\neq -p$.
It means that
\[
	\Gamma_{R_0f}(X)\cap 0_N=\emptyset.
\]
This completes the proof of Proposition \ref{non-surj}.
\end{proof}


\section{Proof of Proposition \ref{rabi mugen}}\label{Rabi section}

In this section, we prove Proposition \ref{rabi mugen} and provide another corollary (Corollary \ref{cor:CFP Sigma}) of Theorem \ref{thm:gen}.
Under the assumption of Proposition \ref{rabi mugen}, there are many disjoint superheavy subsets in $T^\ast N$.
We use these superheavy subsets to prove the existence of many non-displaceable fibers.
This idea comes from \cite{KO19b}.

\begin{proof}[Proof of Proposition \ref{rabi mugen}]
Arguing by contradiction, assume that the moment map $\Phi$ has finitely many non-displaceable fibers.
Let $\Phi^{-1}(y_1),\ldots,\Phi^{-1}(y_{\ell})$ be all the non-displaceable fibers of $\Phi$,
where $y_1,\ldots,y_{\ell}\in\RR^k$.
By the assumption on $H$, the fibers $\Phi^{-1}(y_i)$, $i=1,\ldots,\ell$, are compact.
Then there exists a positive number $r$ such that
\begin{equation}\label{eq:big ball}
	\bigcup_{i=1}^{\ell}\Phi^{-1}(y_i)\subset B^*_{g,r}N.
\end{equation}
By assumption, there exist a positive number $R$ with $R>r$
and a partial symplectic quasi-state $\zeta_R\colon C_c(T^\ast N)\to\mathbb{R}$ such that $S^\ast_{g,R}N$ is $\zeta_R$-superheavy.
Then, by \eqref{eq:big ball},
\begin{equation}\label{eq:majiwaru}
	\left(\bigcup_{i=1}^{\ell}\Phi^{-1}(y_i)\right)\cap S^\ast_{g,R}N=\emptyset.
\end{equation}

Since $S^\ast_{g,R}N$ is $\zeta_R$-superheavy, by Corollary \ref{pshv open},
there exists $y_0\in\Phi(T^*N)$ such that
the fiber $\Phi^{-1}(y_0)$ is non-displaceable from itself and from $S^\ast_{g,R}N$.
Therefore, $y_0\in\{y_1,\ldots,y_{\ell}\}$ and $\Phi^{-1}(y_0)\cap S^\ast_{g,R}N\neq\emptyset$.
It contradicts \eqref{eq:majiwaru} and we complete the proof of Proposition \ref{rabi mugen}.
\end{proof}

Moreover, we have the following corollary of Theorem \ref{thm:gen}.

\begin{corollary}\label{cor:CFP Sigma}
Let $N$ be a closed manifold, $\Sigma$ a compact subset of $T^*N$,
and $H\colon T^*N\to\RR$ a Hamiltonian satisfying condition $(\star)_{\Sigma}$.
Assume that there exists a partial symplectic quasi-state $\zeta\colon C_c(T^*N)\to\RR$ on $(T^*N,\omega_0)$ such that $\Sigma$ is $\zeta$-superheavy.
Then, the level set $H^{-1}(m_H)\subset T^*N$ is non-displaceable from itself and from $\Sigma$.
\end{corollary}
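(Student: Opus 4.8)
The plan is to derive Corollary \ref{cor:CFP Sigma} as a direct consequence of Theorem \ref{thm:gen}, by specializing the moment map to the single function $H$. The key observation is that for $k=1$, a single Hamiltonian $H\colon T^*N\to\RR$ is trivially a moment map $\Phi=\Phi_1=H$, since the Poisson-commutativity condition $\{\Phi_i,\Phi_j\}=0$ is vacuous. So I would set $\Phi=H\colon T^*N\to\RR^1$ and check that the hypotheses of Theorem \ref{thm:gen} are met.

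First I would verify the singleton condition. By hypothesis $H$ satisfies condition $(\star)_\Sigma$, so $S_H^\Sigma=H^{-1}(m_H)\cap\Sigma$ is defined. For the one-dimensional moment map $\Phi=H$, the image $\Phi(S_H^\Sigma)$ is $H(S_H^\Sigma)$. Since every point of $S_H^\Sigma$ lies in $H^{-1}(m_H)$ by definition, we have $H(S_H^\Sigma)\subseteq\{m_H\}$, and this is a singleton $\{y_0\}$ with $y_0=m_H$ (assuming $S_H^\Sigma\neq\emptyset$, which follows from condition $(\star)_\Sigma$ since $m_H=\max_{q\in N}\Phi_1|_{T_q^*N\cap\Sigma}$ is attained). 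Thus the singleton hypothesis $\Phi(S_H^\Sigma)=\{y_0\}$ holds automatically in the $k=1$ case, with no further work needed.

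Next I would invoke Theorem \ref{thm:gen} directly. Its conclusion gives that $\Phi^{-1}(y_0)=H^{-1}(m_H)$ is a $\Sigma$-stem, and moreover that $\Phi^{-1}(y_0)$ is $\zeta$-superheavy for any partial symplectic quasi-state $\zeta$ on $(T^*N,\omega_0)$ for which $\Sigma$ is $\zeta$-superheavy. Since the corollary assumes precisely that such a $\zeta$ exists, we conclude $H^{-1}(m_H)$ is $\zeta$-superheavy. Then by Theorem \ref{prop:shv is hv}, a $\zeta$-superheavy subset is $\zeta$-heavy (part (i)), hence non-displaceable from itself (part (ii)); and since $\Sigma$ is $\zeta$-superheavy, part (iii) yields that $H^{-1}(m_H)$ is non-displaceable from $\Sigma$.

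I do not anticipate a substantive obstacle here, since the corollary is genuinely a corollary — the real content lives in Theorem \ref{thm:gen} and ultimately in Lemma \ref{maintheorem}. The only point requiring a moment's care is the automatic satisfaction of the singleton hypothesis in the scalar case; this is exactly the degeneracy that makes the $k=1$ situation cleaner than the general one. Everything else is a bookkeeping application of the already-established superheaviness and non-displaceability machinery from Theorem \ref{prop:shv is hv} and Theorem \ref{thm:gen}.
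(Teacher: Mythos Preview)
Your proof is correct and follows essentially the same approach as the paper: specialize Theorem \ref{thm:gen} to the scalar moment map $\Phi=H$ and read off the conclusion. The only minor difference is in the final step: the paper extracts from Theorem \ref{thm:gen} that $H^{-1}(m_H)$ is a $\Sigma$-stem and then invokes Corollary \ref{pshv open} (the existence of a fiber non-displaceable from itself and from $\Sigma$ forces that fiber to be $H^{-1}(m_H)$), whereas you use the full superheaviness conclusion of Theorem \ref{thm:gen} together with Theorem \ref{prop:shv is hv}. Your route is in fact slightly more direct.
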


\begin{proof}
By Theorem \ref{thm:gen}, the level set $H^{-1}(m_H)$ is a $\Sigma$-stem.
By Corollary \ref{pshv open}, $H^{-1}(m_H)$ is non-displaceable from itself and from $\Sigma$.
\end{proof}

We provide an example of Corollary \ref{cor:CFP Sigma}.

\begin{example}
Let $(N,g)$ be a closed Riemannian manifold and $r$ a non-negative number.
Let $H\colon T^*N\to\RR$ be a Hamiltonian of the form
\[
	H(q,p)=\rho(\| p\|^2)+U(q),
\]
where $U\colon N\to\RR$ is a smooth potential and $\rho\colon [0,\infty)\to\RR$ is a smooth function which attains its minimum value at $r^2$ and satisfies $\lim_{x\to+\infty}\rho(x)=+\infty$.
Then $H$ satisfies condition $(\star)_{\Sigma}$ where $\Sigma=S^*_{g,r}N$.
Assume that there exists a partial symplectic quasi-state $\zeta\colon C_c(T^*N)\to\RR$ on $(T^*N,\omega_0)$ such that $S^*_{g,r}N$ is $\zeta$-superheavy.
Then, by Corollary \ref{cor:CFP Sigma},
the level set $H^{-1}(m_H)\subset T^*N$ is non-displaceable from itself and from $S^*_{g,r}N$.
\end{example}

 
\section*{Acknowledgments}

The authors cordially thank Felix Schlenk for reading a preliminary version very carefully and for giving very helpful comments.
They also thank Seongchan Kim for giving them the trigger to study the present topic.
When they talked with him about a different mathematical topic, he explained how interesting the spherical pendulum is to them.
This started the first author to consider a superheavy fiber of the spherical pendulum.
He also suggested some other integrable systems (Examples \ref{pendulum} and \ref{neu}) and gave remarks (Remarks \ref{Pol torus} and \ref{KS remark}).
They also sincerely thank Mitsuaki Kimura, Takahiro Matsushita, and Yuhei Suzuki for fruitful discussions and warmhearted advices.

This work has been supported by JSPS KAKENHI Grant Numbers JP18J00765, JP18J00335.


\bibliographystyle{amsart}

\end{document}